\newtheorem{theorem}{Theorem}
\newtheorem{lemma}{Lemma}
\newtheorem{proposition}{Proposition}
\newtheorem{definition}{Definition}
\newtheorem{remark}{Remark}
\newtheorem{example}{Example}
\newtheorem{hypothesis}{Hypothesis}
\newcommand*\Laplace{\mathop{}\!\mathbin\bigtriangleup}
\newcommand{\RR}{\ensuremath{\mathbb{R}}}
\def\etA{e^{tA}}
\def\sat{\texttt{sat}}
\def\satl{\mathfrak{sat}_{L^\infty(0,1)}}
\begin{document}
\title{ \LARGE \bf%
Stability analysis of dissipative systems subject to nonlinear damping via Lyapunov techniques\footnote{This research was partially supported by the iCODE Institute, research project of the IDEX Paris-Saclay, and by the Hadamard Mathematics LabEx (LMH) through the grant number ANR-11-LABX-0056-LMH in the "Programme des Investissements d'Avenir", and by the European Research Council (ERC) through an ERC-Advanced Grant for the TAMING project (grant agreement 66698).}
}

\author{Swann Marx$^{1}$, Yacine Chitour$^{2}$ and Christophe Prieur$^{3}$}

\thanks{$^{1}$Swann Marx is with LAAS-CNRS, Universit\'e de Toulouse, CNRS, 7 avenue du colonel Roche, 31400, Toulouse, France
        {\tt\small marx.swann@gmail.com}.}%
\thanks{$^{2}$Yacine Chitour is with  Laboratoire des Signaux et Syst\`emes
(L2S), CNRS - CentraleSupelec - Universit\'e Paris-Sud, 3, rue Joliot Curie, ´
91192, Gif-sur-Yvette, France,
{\tt\small yacine.chitour@lss.supelec.fr}.}%
\thanks{$^{3}$Christophe Prieur is with Univ. Grenoble Alpes, CNRS, Gipsa-lab, F-38000 Grenoble, France,
   {\tt\small christophe.prieur@gipsa-lab.fr}.}

\maketitle

\abstract{In this article, we provide a general strategy based on Lyapunov functionals to analyse global asymptotic stability of linear infinite-dimensional systems subject to nonlinear dampings under the assumption that the origin of the system is globally asymptotically stable with a \emph{linear damping}.
To do so, we first characterize, in terms of Lyapunov functionals, several types 
of asymptotic stability for linear infinite-dimensional systems, namely the exponential and the polynomial stability. Then, we derive a Lyapunov functional for the nonlinear system, which is the sum of a Lyapunov functional coming from the linear system and another term with compensates the nonlinearity. Our results are then applied to the linearized Korteweg-de Vries equation and the 1D wave equation. }

\section{Introduction}

This paper is concerned with the asymptotic behavior analysis of infinite-dimensional systems subject to a nonlinear damping. These systems are composed by abstract operators generating a strongly continuous semigroup of contractions and a bounded operator representing the control operator (see e.g., \cite{tucsnak2009observation} or \cite{miyadera1992nl_sg} for the introduction of linear and nonlinear operators generating semigroups, respectively). These systems might be for instance a hyperbolic PDE, or a parabolic one or even the linearized Korteweg-de Vries equations. Assuming that a linear damping renders the origin of these systems globally asymptotically stable, we propose a general strategy to analyze the asymptotic behavior of these systems when modifying the linear damping with a nonlinearity.
In contrast with the existing litterature, which uses either integral inequalities(see e.g. \cite{alabau1999stabilisation}, \cite{alabau2002indirect}, \cite{alabau2012some}, \cite{MVC}) or a frequential approach (cf. \cite{jayawardhana2008infinite}, \cite{curtain2004absolute}) or even a compactness uniqueness strategy (\cite{enrike1990exponential}, \cite{rosier2006global}, \cite{mcpa2017siam}), we propose here to design Lyapunov functionals to characterize our results, extending to the infinite-dimensional setting a strategy first deviced in \cite{liu1996finite} for finite-dimensional systems. 

\paragraph{Lyapunov functionnals for infinite-dimensional linear systems} 
In the case where the origin of the linear system is \emph{globally exponentially stable}, there exists a direct way to construct the Lyapunov functional. It relies mainly on the result provided in \cite{DATKO1970610}. However, it is known that an equilibrium point for an infinite-dimensional system that is globally asymptotically stable is not necessarily exponentially stable. In some cases, this point is only \emph{polynomially stable}, i.e., trajectories of the system converge with a decay rate expressed as $\frac{1}{(1+t)^\gamma}$, where $\gamma$ is a positive constant. 

Most of the existing litterature analyzes this asymptotic behavior with some integral inequalities \cite{russell1975decay}, \cite{alabau2002indirect} or with a frequential approach \cite{liu2005characterization}. In constrast with these papers, we propose here to construct a Lyapunov functional in the case of polynomial stability. At the best of our knowledge, such a result is new. Note moreover that it is crucial in our approach, since this functional will be used in the case where the damping is modified with a nonlinearity.

\paragraph{Nonlinear damping for infinite-dimensional systems} There exist many works dealing with nonlinear damping for infinite-dimensional systems. Some of them tackle specific PDEs as for instance hyperbolic ones (see e.g., \cite{haraux1985stabilization}, \cite{MVC} or \cite{alabau2012some}) and others propose a general framework using abstract operators (see \cite{slemrod1989mcss}, \cite{seidman2001note}, \cite{lasiecka2002saturation} and \cite{curtain2016stabilization} for a specific case of nonlinear damping, namely the \emph{saturation}). These papers, which deal with abstract operators, usually assume that the space where the damping takes value, namely $S$, is the same as the control space, namely $U$. However, in practice, this is not the case.

In constrast with existing works for abstract control systems, we aim here at giving a general definition of nonlinear dampings when the nonlinear damping space $S$ is not necessarily equal to the control space $U$. With such a formalism, we are able to make a link between the litterature on abstract operators and the one on hyperbolic systems. At the best of our knowledge, this formalism has been introduced first in \cite{mcp2018ECC} in the case where the nonlinear damping is a saturation.  

In many works, specific PDEs subject to a nonlinear damping have been studied. In \cite{lasiecka2002saturation}, the origin of a wave equation subject to a nonlinear damping, either distributed or located at the boundary, has been proved to be globally asymptotically stable, in the case $S = U$. In \cite{prieur2016wavecone}, a similar result has been stated, but in the case where $S\neq U$. In \cite{daafouz2014nonlinear}, the global asymptotic stability of a PDE coupled to an ODE with a saturated feedback law at the boundary has been tackled. There exist also some papers dealing with local asymptotic stability (see \cite{kang2018boundary} or \cite{kang2017boundary}). Note that both situations ($S=U$ and $S\neq U$) have been tackled for the specific nonlinear partial differential equation Korteweg-de Vries equation in \cite{mcpa2017siam}, in the case where the damping is a saturation. 

\paragraph{Contribution} In this paper, we study two cases: either the origin of the infinite-dimensional system with a linear damping is globally exponentially stable or it is globally polynomially stable. In both cases, we derive a Lyapunov functional which allows us to prove and even characterize the decay rate of the trajectories. 

In the first case (i.e., the origin of the linear system is globally exponentially), we derive a strict and \emph{global} Lyapunov function if $S=U$. By global, we mean that the Lyapunov function does not depend on the initial condition, neither the decay rate. However, if $S\neq U$, we are not able to obtain such a result, but we prove that the origin of the system is semi-globally exponentially stable, meaning in particular that the decay rate of the trajectories depends on the initial condition.

In the second case (i.e., the origin of the linear system is globally polynomially stable), only in the case where $S=U$, we prove that the origin of the system is semi-globally polynomially stable. As in the exponential case, this means that the decay rate of the trajectories depends on the initial condition.

\paragraph{Outline} Section \ref{sec_lyap} provides some necessary and sufficient conditions in terms of Lyapunov functionals for infinite-dimensional systems. In particular, we provide a new Lyapunov functional in the case where the origin is globally polynomially stable. In Section \ref{sec_main}, nonlinear dampings for infinite-dimensional systems are introduced and our main results are stated. Their proofs are then given in Section \ref{sec_proof}. These results are illustrated in Section \ref{sec_example} on some examples, namely the linearized Korteweg-de Vries equation and the 1D wave equation. Section \ref{sec_conclusion} collects some concluding remarks and further research lines to be investigated. Appendix \ref{app-finite} tackles the specific case of finite-dimensional systems and provides also a decay rate characterization, that applies also for the case $S=U$ and the linear damping stabilizes exponentially the system. 

\paragraph{Acknowledgements:} The authors want to warmly thank Fatiha Alabau-Boussouira and Enrique Zuazua  for all the encouraging and interesting discussions and for having pointed out a large number of crucial references. We would like to thank also Nicolas Burq for the interest in the present work.

\section{Lyapunov criteria for linear infinite-dimensional systems}
\label{sec_lyap}

Let $H$ be a real Hilbert space equipped with the scalar product $\langle \cdot,\cdot\rangle_H$. Let $A:D(A)\subset H\rightarrow H$ be a (possibly unbounded) linear operator whose domain $D(A)$ is dense in $H$. We suppose that $A$ generates a strongly continuous semigroup of contractions denoted by $(e^{tA})_{t\geq 0}$.  We use $A^\star$ to denote the adjoint operator of $A$. 

In this section, we consider the linear system given by
\begin{equation}
\label{linear-system}
\left\{
\begin{split}
&\frac{d}{dt}z = Az,\\
&z(0)=z_0.
\end{split}
\right.
\end{equation}
Since $A$ generates a strongly continuous semigroup of contractions, there exist both strong and weak solutions to \eqref{linear-system}. Moreover, the origin of \eqref{linear-system} is Lyapunov stable\footnote{The origin of \eqref{linear-system} is said to be Lyapunov stable in $H$ if, for any positive $\delta$, there exists a positive constant $\varepsilon=\varepsilon(\delta)$ such that
$$
\Vert z_0\Vert_H\leq \varepsilon\Rightarrow \Vert \etA z_0\Vert_H\leq \delta.
$$
} in $H$. Indeed, the property of contraction satisfied by $(\etA)_{t\geq 0}$ implies that
\begin{equation}
\Vert \etA z_0\Vert_H\leq \Vert z_0\Vert_H. 
\end{equation}
The origin is attractive in $H$ if, for every $z_0\in H$, one has
\begin{equation}
\lim_{t\rightarrow +\infty}\Vert \etA z_0\Vert_H=0, 
\end{equation}
and this property is also referred as \emph{strong stability} (see e.g., \cite[Section 1.3]{alabau2012some}). 
This section aims at characterizing the decay rate of the trajectory when assuming that the origin is attractive. We first consider \emph{global exponential stability}:
\begin{definition}[Global exponential stability]
The origin of \eqref{linear-system} is said to be \emph{globally exponentially stable} if there exist two positive constants $C$ and $\alpha$ such that, for any $z_0\in H$,
\begin{equation}\label{eq:GES}
\Vert \etA z_0\Vert_H \leq C e^{-\alpha t}\Vert z_0\Vert_H,\qquad \forall t\geq 0.
\end{equation} 
\end{definition}

\begin{remark}\label{rem:DA}
If the origin is of \eqref{linear-system} is globally exponentially stable in $H$, then, provided that the initial condition $z_0$ is in $D(A)$, the origin is also globally exponentially stable in $D(A)$. Indeed, since $A$ generates a strongly continuous semigroup of contractions, then, for any initial condition $z_0\in D(A)$, $Ae^{tA}z_0\in H$, for all $t\geq 0$. Since \eqref{eq:GES} holds, this means in particular that 
\begin{equation*}
\Vert e^{tA} Az_0\Vert_H \leq C e^{-\alpha t} \Vert A z_0\Vert_{H},\quad \forall t\geq 0.
\end{equation*}
Note moreover that $e^{tA}A=Ae^{tA}$ (see e.g., \cite[Proposition 2.1.5]{tucsnak2009observation}) and $\Vert \cdot \Vert_{D(A)}:=\Vert \cdot \Vert_H + \Vert A\cdot\Vert_H$. Therefore,
\begin{equation*}
\Vert e^{tA}z_0\Vert_{D(A)} \leq C e^{-\alpha t}\Vert z_0\Vert_{D(A)},\quad \forall t\geq 0.
\end{equation*}
\end{remark}

Another characterization of attractivity is the \textit{polynomial stability}. There exists several possible definitions referring to polynomial stability in the litterature, cf. the  nice survey \cite{alabau2012some}. In any case, this is a weaker notion of attractivity than exponential stability, because the initial condition usually belongs to a more regular space defined as follows:
\begin{equation}
D(A^\theta) = \lbrace z\in H \mid A^i z\in H,\: i=1,\ldots,\theta \rbrace,
\end{equation}
where $\theta$ is a positive integer. 
We suppose with no further mention in the remaining sections of the paper that, every time polynomial stability is at stake,  then $A^{\theta}$ is well-defined and $D(A^\theta)$ is dense in $H$ for some positive integer $\theta$. 

This space is endowed with the following norm
\begin{equation}
\Vert \cdot \Vert_{D(A^\theta)}:=\sum_{i=0}^\theta \Vert A^i \cdot \Vert_{H}.  
\end{equation}
By $A^i z$, we mean that $A$ is applied $i$ times to $z$. We define $A^0 = I_H$ so that we retrieve the classical definition of the graph norm of the operator $A$. This definition is borrowed from \cite{tucsnak2009observation}, just above Proposition 2.2.12. 

\begin{definition}[Global polynomial stability]
\label{def-pol-stab}
Given $\theta$ a positive integer, the origin of \eqref{linear-system} is said to be polynomially stable if there exist two positive constants $C$ and $\gamma:=\gamma(\theta)$ such that, for any initial condition $z_0\in D(A^\theta)$,
\begin{equation}
\label{poly1}
\Vert \etA z_0\Vert_H \leq \frac{C}{(1+t)^{\gamma}} \Vert z_0\Vert_{D(A^\theta)},\qquad \forall t\geq 0.
\end{equation}
\end{definition}
In recent decades, Lyapunov functions have been instrumental to characterize stability for either finite-dimensional or infinite-dimensional systems. The main result of \cite{DATKO1970610} is stated in the following proposition. 
\begin{proposition}[Exponential stability \cite{DATKO1970610}]
\label{lyapunov90}
The origin of \eqref{linear-system} is said to be globally exponentially stable if and only if there exist a self-adjoint, positive definite and coercive operator $P\in\mathcal{L}(H)$ and a positive constant $C$ such that
\begin{equation}
\label{lyapunov-linear-equation}
\langle A z,Pz\rangle_H + \langle P z, Az\rangle_H\leq-C\Vert z\Vert^2_H,\quad \forall z\in D(A).
\end{equation}
One can choose $P$ in the latter equation in the form 
\begin{equation}
P=\int_0^\infty e^{sA^\star} e^{sA} ds+\alpha I_H,
\end{equation}
with $\alpha>0$. Note that this operator defines also a bounded operator of $D(A)$.
\end{proposition}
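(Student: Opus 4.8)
The plan is to prove the equivalence in the two directions, with the ``only if'' direction simultaneously exhibiting the explicit operator, and then to check the side remark about $D(A)$.

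\emph{Sufficiency.} Assuming \eqref{lyapunov-linear-equation} for some self-adjoint, positive definite, coercive $P\in\mathcal{L}(H)$ and some $C>0$, I would take $V(z):=\langle Pz,z\rangle_H$ as Lyapunov functional. Coercivity and boundedness of $P$ give $p_-\|z\|_H^2\le V(z)\le p_+\|z\|_H^2$ with $p_-$ the coercivity constant and $p_+=\|P\|$. For $z_0\in D(A)$ the strong solution $t\mapsto e^{tA}z_0$ stays in $D(A)$ and is $C^1$, so $t\mapsto V(e^{tA}z_0)$ is differentiable with derivative $\langle Ae^{tA}z_0,Pe^{tA}z_0\rangle_H+\langle Pe^{tA}z_0,Ae^{tA}z_0\rangle_H$, which by \eqref{lyapunov-linear-equation} is $\le-C\|e^{tA}z_0\|_H^2\le-\tfrac{C}{p_+}V(e^{tA}z_0)$. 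Gr\"onwall's inequality yields $V(e^{tA}z_0)\le e^{-(C/p_+)t}V(z_0)$, hence $\|e^{tA}z_0\|_H\le\sqrt{p_+/p_-}\,e^{-(C/2p_+)t}\|z_0\|_H$. As this bound is uniform in $z_0$ and $D(A)$ is dense in $H$ with $e^{tA}$ bounded, it extends to every $z_0\in H$, which is \eqref{eq:GES}.

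\emph{Necessity and the formula for $P$.} Conversely, assume \eqref{eq:GES}. Then $\int_0^\infty\|e^{sA}z\|_H^2\,ds\le\tfrac{C^2}{2\alpha}\|z\|_H^2$ for all $z\in H$, so $z\mapsto\int_0^\infty\|e^{sA}z\|_H^2\,ds$ is a bounded, nonnegative, symmetric quadratic form; by the Riesz representation theorem it equals $\langle P_0z,z\rangle_H$ for a unique bounded, self-adjoint, nonnegative $P_0\in\mathcal{L}(H)$, and since $\|e^{sA^\star}e^{sA}z\|_H\le C^2e^{-2\alpha s}\|z\|_H$ the Bochner integral $\int_0^\infty e^{sA^\star}e^{sA}z\,ds$ converges and represents $P_0z$. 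Putting $P:=P_0+\alpha I_H$ with $\alpha>0$, the operator $P$ is bounded, self-adjoint and satisfies $\langle Pz,z\rangle_H\ge\alpha\|z\|_H^2$, hence is positive definite and coercive. For $z\in D(A)$, using $e^{sA}Az=Ae^{sA}z$, self-adjointness of $P_0$, and the polarized form $\langle P_0z,w\rangle_H=\int_0^\infty\langle e^{sA}z,e^{sA}w\rangle_H\,ds$, one gets $\langle Az,Pz\rangle_H+\langle Pz,Az\rangle_H=2\int_0^\infty\langle e^{sA}z,Ae^{sA}z\rangle_H\,ds+2\alpha\langle Az,z\rangle_H$. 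The integrand equals $\tfrac12\tfrac{d}{ds}\|e^{sA}z\|_H^2$ and is dominated by $C^2e^{-2\alpha s}\|z\|_H\|Az\|_H\in L^1(0,\infty)$, so by the fundamental theorem of calculus and $\|e^{sA}z\|_H\to0$ the integral equals $-\tfrac12\|z\|_H^2$; moreover $\langle Az,z\rangle_H\le0$ since $A$ is dissipative (Lumer--Phillips, $(e^{tA})_{t\ge0}$ being a contraction semigroup). Hence \eqref{lyapunov-linear-equation} holds with $C=1$.

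\emph{The remark about $D(A)$.} Polarizing the identity $\langle Az,P_0z\rangle_H+\langle P_0z,Az\rangle_H=-\|z\|_H^2$ gives $\langle P_0z,Aw\rangle_H=-\langle z,w\rangle_H-\langle Az,P_0w\rangle_H$ for $z,w\in D(A)$, whence $|\langle P_0z,Aw\rangle_H|\le(1+\|P_0\|)\|z\|_{D(A)}\|w\|_H$; this shows $P$ maps $D(A)$ boundedly into $D(A^\star)$, so $P\in\mathcal{L}(D(A))$ under the standing structural hypotheses (in particular whenever $D(A^\star)=D(A)$ with equivalent norms, as in the examples of Section~\ref{sec_example}); alternatively one can invoke Remark~\ref{rem:DA} to run the same construction on $D(A)$.

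\emph{Main obstacle.} The conceptual content is light and the difficulty is analytic: making rigorous sense of the improper operator integral $\int_0^\infty e^{sA^\star}e^{sA}\,ds$ as a bounded self-adjoint operator, and justifying the interchange of $A$ with that integral --- equivalently the telescoping identity $\int_0^\infty\tfrac{d}{ds}\|e^{sA}z\|_H^2\,ds=-\|z\|_H^2$ for $z\in D(A)$ --- both of which rely on the decay \eqref{eq:GES} to furnish an $L^1$-in-$s$ dominating function. The remaining subtleties, namely the density argument passing the decay estimate from $D(A)$ to all of $H$ and the identification of $D(A^\star)$ with $D(A)$ for the last assertion, are routine.
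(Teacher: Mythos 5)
Your proof of the equivalence is correct and follows essentially the same Datko-type route as the cited source and as the paper's own proof of the polynomial analogue (Proposition \ref{lyapunov-2018}): Gr\"onwall applied to \(V(z)=\langle Pz,z\rangle_H\) for sufficiency, and for necessity the convergent operator integral \(P_0=\int_0^\infty e^{sA^\star}e^{sA}\,ds\) together with the telescoping identity \(\int_0^\infty \tfrac{d}{ds}\Vert e^{sA}z\Vert_H^2\,ds=-\Vert z\Vert_H^2\) and dissipativity of \(A\), all justified by the exponential decay exactly as in the paper's argument.

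The one piece you do not fully deliver is the closing remark that \(P\) is also bounded as an operator of \(D(A)\). Your polarization argument only yields that \(P\) maps \(D(A)\) boundedly into \(D(A^\star)\) (indeed \(A^\star P_0 z=-z-P_0Az\)), which is all the Lyapunov identity gives; and your fallback of ``running the same construction on \(D(A)\)'' via Remark \ref{rem:DA} produces a \emph{different} operator, since the adjoint there is taken with respect to the graph inner product of \(D(A)\), not the \(P\) of the statement. Your explicit caveat (e.g. \(D(A^\star)=D(A)\) with equivalent norms, as for the wave equation) is therefore warranted rather than a defect—note that the paper itself asserts this remark without proof, and that for the Korteweg--de Vries example \(D(A^\star)\neq D(A)\)—but be aware that, as written, this last assertion of the proposition is only partially justified by your argument.
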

Note that an operator $P$ satisfying \eqref{lyapunov-linear-equation} has also been considered in the context of the asymptotic stability analysis of linear switched systems in \cite{hante2011converse}. 

We next turn to a similar characterization of polynomial stability, i.e., in terms of a Lyapunov function. 
To the best of our knowledge, polynomial stability seems to have first been considered in \cite{russell1975decay} and \cite{alabau1999stabilisation}. Later on, it has been studied with spectrum analysis in  \cite{liu2005characterization}. We propose a Lyapunov characterization of such a stability with the following proposition.

\begin{proposition}[$\theta$-global polynomial stability]
\label{lyapunov-2018}
Given $\theta$ a positive integer, the origin of \eqref{linear-system} is said to be globally polynomially stable with $\gamma>\frac{1}{2}$ if and only if there exist a self-adjoint, positive-definite and coercive operator $P_\theta : D(A^\theta)\rightarrow D(A^\theta)\rightarrow $ and two positive constants $C$ and $C_\theta$ such that
\begin{equation}
\label{pol-lyapunov-linear-equation}
\langle A z,P_\theta z\rangle_H + \langle P_\theta z, Az\rangle_H\leq-C\Vert z\Vert^2_H,\quad \forall z\in D(A),
\end{equation}
and
\begin{equation}\label{eq:est-pol}
\langle e^{tA}z,P_\theta e^{tA}z\rangle_H\leq \frac{C_\theta}{(1+t)^{2\gamma-1}}\Vert z\Vert^2_{D(A^\theta)}, \quad \forall z\in D(A^\theta),\quad t\geq 0.
\end{equation}
One can choose $P_\theta$ in the latter equation in the form 
\begin{equation}
\label{p-theta}
P_\theta := \int_0^\infty (e^{sA})^\star e^{sA}ds + \alpha I_{D(A^\theta)},
\end{equation}
with $\alpha>0$.
\end{proposition}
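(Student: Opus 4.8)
The plan is to imitate the construction behind Proposition~\ref{lyapunov90} (Datko's theorem), with the exponential weight replaced by the polynomial one, plus one genuinely new ingredient: the auxiliary bound \eqref{eq:est-pol}, combined with a contraction/monotonicity argument that recovers the \emph{sharp} rate $\gamma$ and not merely the rate $\gamma-\tfrac12$ that \eqref{eq:est-pol} would give on its own. Throughout, the structural point is that the hypothesis $\gamma>\tfrac12$ is exactly what makes $s\mapsto\Vert e^{sA}z\Vert_H^2$ integrable on $(0,\infty)$ for $z\in D(A^\theta)$, so that the quadratic form $z\mapsto\langle z,P_\theta z\rangle_H=\int_0^\infty\Vert e^{sA}z\Vert_H^2\,\d s+\alpha\Vert z\Vert_H^2$ attached to \eqref{p-theta} is well defined and coercive on $D(A^\theta)$ (but, in general, not on all of $H$ --- otherwise Proposition~\ref{lyapunov90} would force exponential stability).

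\emph{Necessity.} Assume the origin is globally polynomially stable with $\gamma>\tfrac12$ and let $P_\theta$ be as in \eqref{p-theta}. Self-adjointness is clear from $((e^{sA})^\star e^{sA})^\star=(e^{sA})^\star e^{sA}$; positive-definiteness and coercivity from $\langle z,P_\theta z\rangle_H\ge\alpha\Vert z\Vert_H^2$; finiteness of the form on $D(A^\theta)$ from $\Vert e^{sA}z\Vert_H^2\le C^2(1+s)^{-2\gamma}\Vert z\Vert_{D(A^\theta)}^2$ together with $2\gamma>1$. For the first inequality, read $\langle Az,P_\theta z\rangle_H+\langle P_\theta z,Az\rangle_H$ as $\tfrac{d}{dt}\big|_{t=0}\langle e^{tA}z,P_\theta e^{tA}z\rangle_H$; using $e^{sA}Az=Ae^{sA}z$ and the change of variables $u=s+t$ one gets $\langle e^{tA}z,P_\theta e^{tA}z\rangle_H=\int_t^\infty\Vert e^{uA}z\Vert_H^2\,\d u+\alpha\Vert e^{tA}z\Vert_H^2$, whose $t$-derivative at $0$ equals $-\Vert z\Vert_H^2+\alpha\,\tfrac{d}{dt}\big|_{t=0}\Vert e^{tA}z\Vert_H^2\le-\Vert z\Vert_H^2$ (the last term is $\le0$ by contraction), which is \eqref{pol-lyapunov-linear-equation} with $C=1$. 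The same identity and $\Vert e^{uA}z\Vert_H^2\le C^2(1+u)^{-2\gamma}\Vert z\Vert_{D(A^\theta)}^2$ give, after integration,
\[
\langle e^{tA}z,P_\theta e^{tA}z\rangle_H\le\Big(\tfrac{C^2}{2\gamma-1}+\alpha C^2\Big)(1+t)^{-(2\gamma-1)}\Vert z\Vert_{D(A^\theta)}^2 ,
\]
that is, \eqref{eq:est-pol}.

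\emph{Sufficiency.} Conversely, assume $P_\theta$, $C$, $C_\theta$ satisfy \eqref{pol-lyapunov-linear-equation}--\eqref{eq:est-pol}. Fix $z\in D(A^\theta)$, so that $e^{tA}z\in D(A^\theta)$ for all $t\ge0$, and set $V(t):=\langle e^{tA}z,P_\theta e^{tA}z\rangle_H$. Self-adjointness of $P_\theta$ and \eqref{pol-lyapunov-linear-equation} (applied along the trajectory) give $\dot V(t)\le-C\Vert e^{tA}z\Vert_H^2$, so $V$ is non-increasing; integrating over $[t,2t]$,
\[
\int_t^{2t}\Vert e^{sA}z\Vert_H^2\,\d s\le\tfrac1C\big(V(t)-V(2t)\big)\le\tfrac1C V(t)\le\tfrac{C_\theta}{C}(1+t)^{-(2\gamma-1)}\Vert z\Vert_{D(A^\theta)}^2 .
\]
Since $(e^{tA})_{t\ge0}$ is a contraction, $s\mapsto\Vert e^{sA}z\Vert_H$ is non-increasing, hence the left-hand side is $\ge t\,\Vert e^{2tA}z\Vert_H^2$, and therefore $\Vert e^{2tA}z\Vert_H^2\le\tfrac{C_\theta}{C}\,t^{-1}(1+t)^{-(2\gamma-1)}\Vert z\Vert_{D(A^\theta)}^2$ for $t>0$. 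Using $t(1+t)^{2\gamma-1}\ge c(1+2t)^{2\gamma}$ for $t\ge\tfrac12$ and some $c>0$, writing $\tau=2t$, and covering $0\le\tau\le1$ by the contraction bound $\Vert e^{\tau A}z\Vert_H\le\Vert z\Vert_H\le\Vert z\Vert_{D(A^\theta)}$, one obtains $\Vert e^{\tau A}z\Vert_H^2\le C'(1+\tau)^{-2\gamma}\Vert z\Vert_{D(A^\theta)}^2$ for all $\tau\ge0$, which is \eqref{poly1} with rate $\gamma$.

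\emph{Main obstacle.} The crux is this last direction: \eqref{eq:est-pol} and coercivity alone give only $\alpha\Vert e^{tA}z\Vert_H^2\le V(t)\le C_\theta(1+t)^{-(2\gamma-1)}\Vert z\Vert_{D(A^\theta)}^2$, i.e.\ the weaker rate $\gamma-\tfrac12$; it is precisely the chain ``\eqref{pol-lyapunov-linear-equation} $\Rightarrow\ \dot V\le-C\Vert e^{tA}z\Vert_H^2\ \Rightarrow\ \int_t^{2t}\Vert e^{sA}z\Vert_H^2\,\d s\le V(t)/C$'' together with monotonicity of $s\mapsto\Vert e^{sA}z\Vert_H$ (the contraction property) that upgrades the rate to $\gamma$. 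A second, purely technical, source of care is that $P_\theta$ in \eqref{p-theta} must be handled through its quadratic form on $D(A^\theta)$: well-posedness of that form, the interchange of $A$ with $\int_0^\infty$, the commutation $Ae^{sA}=e^{sA}A$, and the $t$-differentiation of $V$ (carried out first for $z\in D(A^{\theta+1})$ and then extended by density of $D(A^{\theta+1})$ in $D(A^\theta)$) all hinge on $\gamma>\tfrac12$ and on working over $D(A^\theta)$ rather than $H$.
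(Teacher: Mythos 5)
Your proposal is correct and follows essentially the same route as the paper: necessity via the Datko-type operator \eqref{p-theta} whose quadratic form is $\int_0^\infty\Vert e^{sA}z\Vert_H^2\,ds+\alpha\Vert z\Vert_H^2$, and sufficiency by combining $\dot V\leq -C\Vert e^{tA}z\Vert_H^2$ with the monotonicity of $t\mapsto\Vert e^{tA}z\Vert_H$ over an interval of length comparable to $t$ (you use $[t,2t]$, the paper $[t/2,t]$) to upgrade the rate from $\gamma-\tfrac12$ in \eqref{eq:est-pol} to $\gamma$. The only cosmetic difference is that the paper constructs the limit operator $Q_\theta$ via the uniform boundedness principle before passing to the quadratic form, while you argue directly on the form itself.
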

\begin{proof}\textbf{ of Proposition \ref{lyapunov-2018}: }
The proof is divided into two parts: the first part handles the necessary condition of item (ii) (i.e., the $\Rightarrow$ part), while the second part focuses on the sufficient condition (i.e, the $\Leftarrow$ part). 

\textbf{ ($\Rightarrow$):}
We assume that the origin of \eqref{linear-system} is polynomially stable. This part of the proof is inspired by \cite{DATKO1970610}. 

Since the origin of \eqref{linear-system} is $\theta$-globally polynomially stable, then, for all $z\in D(A^\theta)$ and every $T>0$,
\begin{equation}
\begin{split}
\int_0^T \Vert e^{sA}z\Vert_H^2 ds \leq & \int_{0}^T \frac{C}{(1+s)^{2\gamma-1}}\Vert z\Vert^2_{D(A^\theta)}ds\leq C\Vert z\Vert^2_{D(A^\theta)}\int_{0}^\infty \frac{ds}{(1+s)^{2\gamma-1}}\\
\leq & \frac{C}{2\gamma - 1} \Vert z\Vert^2_{D(A^\theta)}.
\end{split}
\end{equation}
This implies that $\int_0^\infty \Vert e^{sA}z\Vert_H^2 ds$ is convergent and strictly positive as long as $z$ is different from $0$.  Moreover, for every $t\geq 0$, the operator 
\begin{equation}
Q_\theta(t) = \int_0^t (e^{sA})^\star e^{sA} ds 
\end{equation}
satisfies the following properties, for all $z_1,z_2$ in $D(A^\theta)$:
\begin{itemize}
\item[$(i)$] the function $t\mapsto \langle Q_\theta(t)z_1,z_2\rangle$ is well-defined;
\item[$(ii)$] if $t_1\leq t_2$, then $0\leq \langle Q_\theta(t_1) z,z\rangle_H\leq \langle Q_\theta(t_2)z,z\rangle_H$;
\item[$(iii)$] $\langle Q_\theta(t) z_1,z_2\rangle_H = \langle z_1,Q_\theta(t) z_2\rangle_H$. 
We only provide an argument for itemm $(i)$ since the two others are straigthforward. One has
\begin{align*}|\langle Q_\theta(t)z_1,z_2\rangle|^2 =  & \left|\int_0^t \langle e^{sA} z_1,e^{sA}z_2\rangle_H ds\right|^2\\
\leq & \left(\int_0^t \Vert e^{sA}z_1\Vert_H \Vert e^{sA}z_2\Vert_H ds\right)^2\\
\leq & \left(\int_0^t \Vert e^{sA}z_1\Vert^2_H ds\right)\left(\int_0^t \Vert e^{sA}z_2\Vert^2_H ds\right)\\
\leq & \left(\int_0^\infty \Vert e^{sA}z_1\Vert_H^2 ds\right)\left(\int_0^\infty \Vert e^{sA}z_2\Vert_H^2 ds\right).
\end{align*}
\end{itemize}
Therefore, by the principle of uniform boundedness, it follows that
\begin{equation}
\sup_{0\leq t\leq \infty}\Vert Q_\theta(t)\Vert_{\mathcal{L}(D(A^\theta),H)}<+\infty.
\end{equation}
Then, using items $(ii)$ and $(iii)$, it follows that there exists a self-adjoint and positive-definite operator $Q_\theta:\: D(A^\theta)\rightarrow H$ such that, for each $z\in D(A^\theta)$
\begin{equation}
\lim_{t\rightarrow +\infty} \Vert Q_\theta(t)z - Q_\theta z\Vert_H = 0.
\end{equation}
Define the function $V:D(A)\to\mathbb{R}_+$ by \begin{equation}
V(z) = \langle Q_\theta z, z\rangle_H=\int_0^\infty \Vert e^{As}z\Vert_H^2 ds.
\end{equation}
Clearly, one has, for every $t\geq 0$, that
\begin{equation}
\begin{split}
V(\etA z) &= \langle Q_\theta \etA z, \etA z\rangle_H=\int_0^\infty \Vert e^{A(t+s)}z\Vert_H^2 ds\\
&=\int_t^\infty \Vert e^{sA}z\Vert_H^2 ds\leq 
 \frac{C^2 \Vert z\Vert^2_{D(A)}}{(2\gamma(\theta)-1)(1+t)^{2\gamma(\theta)-1}}.\label{eq:Vt}
\end{split}
\end{equation}

 Since $z\in D(A^\theta)$, the derivative of $V$ with respect to $t$ exists and is given by
\begin{align*}
\frac{d}{dt}V(e^{tA}z) = &2\langle Q_\theta A e^{tA}z,e^{tA}z\rangle_H\\
= & 2\lim_{\tau \rightarrow +\infty} \int_0^\tau \langle Ae^{(t+s)A}z,e^{(t+s)A}z\rangle_Hds\\
= & \lim_{\tau \rightarrow +\infty} \int_0^\tau \frac{d}{ds}\Vert e^{(t+s)A}z\Vert^2_H ds\\
= & -\Vert e^{tA}z\Vert^2_H.
\end{align*}
In addition, since the origin is polynomially stable, the following holds
\begin{align*}
V(e^{tA}z)= &\langle Q_\theta e^{tA} z,\etA z\rangle_H
=  \int_0^\infty \Vert e^{(t+s)A}\Vert_H^2 d \\
= & \int_t^{\infty} \Vert e^{sA}z\Vert_H^2 ds
\leq \frac{C^2 \Vert z\Vert_{D(A^\theta)}}{(2\gamma-1)(1+t)^{2\gamma-1}},
\end{align*}
If one sets $P_\theta$ as 
\begin{equation}
P_\theta:=Q_\theta + \alpha I_{D(A^\theta)},
\end{equation}
with $\alpha>0$, one gets a self-adjoint, positive-definite and coercive operator. Note that, for every $z\in D(A^\theta)$, one has
$$
\langle P_\theta z,z\rangle_H=V(z)+\alpha \Vert z\Vert^2_H.
$$
Hence, \eqref{pol-lyapunov-linear-equation} is satisfied as well as
 \eqref{eq:est-pol} since $\frac{d}{dt}\Vert z\Vert^2_H\leq 0$. 
This concludes the proof of the first part.

\textbf{($\Leftarrow$):} 
We assume that there exists $P_\theta : D(A^\theta)\rightarrow D(A^\theta)$ such that \eqref{pol-lyapunov-linear-equation} and \eqref{eq:est-pol} hold. For $z\in D(A^\theta)$, set $V(z)=\langle P_\theta z,z\rangle_H$. Using \eqref{pol-lyapunov-linear-equation}, the derivative of $V$ along the the dynamics \eqref{linear-system} yields
\begin{equation}
\label{first-pol-lyap}
\frac{d}{dt} V(e^{tA}z)\leq -C\Vert z\Vert^2_H.
\end{equation}
Using \eqref{eq:est-pol}, one has
\begin{equation}
\lim_{t\rightarrow +\infty} V(\etA z)=0,\quad \forall z\in D(A^\theta).
\end{equation}
Then, integrating the latter equation between any non negative time $t$ and $\infty$, one has
\begin{equation}
\label{technique-yacine}
V(\etA z)\geq C\int_{t}^\infty \Vert e^{sA} z\Vert^2_H ds. 
\end{equation}
Recalling that $H$-norm of $(\etA)_{t\geq 0}$ is non increasing, the following holds
\begin{equation}
\Vert \etA z\Vert^2_H \leq \Vert e^{sA}\Vert_H^2,\quad \forall s\in \left[t/2,t\right],
\end{equation}
for $t>0$. Integrating the latter inequality between $\frac{t}{2}$ and $t$ yields
\begin{equation}
\frac{t}2\Vert \etA z\Vert^2_H \leq \int_{\frac{t}{2}}^{t} \Vert e^{sA}z\Vert^2_H.
\end{equation}
Noticing that $1+t\leq 2t$, for $t\geq 1$, then the following holds
\begin{equation}
\begin{split}
(1+t)\Vert \etA z\Vert_H^2 \leq &4\int_{\frac{t}{2}}^t \Vert e^{sA} z\Vert_H^2 ds
\leq  4\int_{\frac{t}{2}}^\infty \Vert e^{sA}z\Vert_H^2
\leq \frac{4}{C} V(e^{\frac{tA}{2}}z),
\end{split}
\end{equation}
where we have used \eqref{technique-yacine}. Using \eqref{eq:est-pol}, we obtain
\begin{equation}
\Vert \etA z\Vert^2_H \leq 4\frac{C_\theta}{C}\frac{1}{(1+t)^{2\gamma}} \Vert z\Vert^2_{D(A^\theta)},
\end{equation}
which concludes the proof of the polynomial stability of \eqref{linear-system} and that of Proposition \ref{lyapunov-2018}. 
\end{proof}

\section{Linear infinite-dimensional systems subject to a nonlinear damping}
\label{sec_main}

In this section, we discuss the notion of nonlinear damping function and state our main results, that is, roughly speaking, the following: when modifying a stabilizing linear feedback law with a nonlinear damping function, we characterize the asymptotic decays of corresponding trajectories.  

\subsection{Linear control system with collocated feedback law}\label{ss: FS}

Let $H$ and $U$ be real Hilbert spaces equipped with the scalar product $\langle \cdot,\cdot\rangle_H$ and $\langle \cdot, \cdot\rangle_U$, respectively. Let $A:\: D(A)\subset H\rightarrow H$ be a (possibly unbounded) linear operator whose domain $D(A)$ is dense in $H$. We suppose moreover that $A$ generates a strongly semigroup of contractions denoted by $(e^{tA})_{t\geq 0}$. We denote by $A^\star$ its adjoint. Finally, let $B$ be a bounded operator from $U$ to $H$ (i.e., $B\in\mathcal{L}(U,H)$) and let us denote by $B^\star$ its adjoint.

We consider the infinite-dimensional linear control system given by
\begin{equation}
\label{control-system}
\left\{
\begin{split}
&\frac{d}{dt} z = Az + Bu,\\
&z(0)=z_0,
\end{split}
\right.
\end{equation}
where 
$u$ denotes the control. In addition, we will choose the following collocated feedback law
\begin{equation}
\label{feedback-law}
u=-kB^\star z,
\end{equation}
where $k$ is a positive constant. 

The corresponding closed-loop system is then written as follows
\begin{equation}
\label{lclosed-system}
\left\{
\begin{split}
&\frac{d}{dt}z = (A-kBB^\star)z:=\tilde A z,\\
&z(0)=z_0.
\end{split}
\right.
\end{equation}
Since $B$ is a bounded operator, the domain of $\tilde A$ coincides with $D(A)$. Moreover, it is easy to see that $\tilde{A}$ generates a strongly continuous semigroup of contractions.

The asymptotic stability of the origin of \eqref{lclosed-system} has to be precised. We then assume that the origin of \eqref{lclosed-system} is either globally exponentially stable or globally polynomially stable. Both hypotheses are collected just below.

\begin{hypothesis}[Exponential stability]
\label{hypothesis-exp}
Assume that the origin of \eqref{lclosed-system} is globally exponentially stable.
\end{hypothesis}

\begin{example}
\label{example-wave}
Let us consider the following linear wave equation
\begin{equation}
\label{wave-equation-linear}
\left\{
\begin{split}
&z_{tt} = \Laplace z - a(x) z_t,\quad (t,x)\in \mathbb{R}_+\times \Omega,\\
&z(t,x)=0,\quad (t,x)\in \mathbb{R}_+\times \Gamma,\\
&z(0,x)=z_0(x),\: z_t(0,x)=z_1(x),
\end{split}
\right.
\end{equation}
where $\Omega\subset \mathbb{R}^n$ ($n\geq 1$) is a bounded connected domain with a smooth boundary $\Gamma:=\partial \Omega$. The damping localization function $a(\cdot)$ is smooth, nonnegative and there exists a positive constant $a_0$ such that $a(x)\geq a_0$ on a non empty open subset $ \omega$ of $\Omega$. In other words, the open subset $\omega$ is actually the set where the control acts. The feedback control is said to be \textit{globally} distributed if $\omega=\Omega$ and locally distributed if $\Omega\setminus \omega$ has a positive Lebesgue measure.

Equation \eqref{wave-equation-linear} can be rewritten as an abstract control system \eqref{lclosed-system} setting $H:=H^1_0(\Omega)\times L^2(\Omega)$, $U=L^2(\Omega)$ and
\begin{equation}
\begin{split}
A: D(A)\subset H&\rightarrow H\\
\begin{bmatrix}
z_1 & z_2
\end{bmatrix}^\top &\mapsto \begin{bmatrix}
z_2 & \Laplace z_1
\end{bmatrix},
\end{split}
\end{equation}
\begin{equation}
\begin{split}
B: U&\rightarrow H,\\
u &\mapsto \begin{bmatrix}
0 & \sqrt{a(x)}u
\end{bmatrix}^\top, 
\end{split}
\end{equation}
where $$D(A):=(H^2(\Omega)\cap H^1_0(0,1))\times H_0^1(\Omega).$$

The adjoint operators of $A$ and $B$ are, respectively
\begin{equation}
\begin{split}
A^\star: D(A)\subset H &\rightarrow H\\
\begin{bmatrix}
z_1 & z_2
\end{bmatrix}^\top &\mapsto -A\begin{bmatrix}
z_1 & z_2
\end{bmatrix}^\top,
\end{split}
\end{equation}
and
\begin{equation}
\begin{split}
B^\star: H&\rightarrow U,\\
\begin{bmatrix}
z_1 & z_2
\end{bmatrix}^\top &\mapsto \sqrt{a(x)} z_2.
\end{split}
\end{equation}
A straightforward computation, combined with some integrations by parts, shows that
\begin{equation*}
\langle Az,z\rangle_H + \langle z,Az\rangle_H \leq 0,\quad \forall z\in D(A).
\end{equation*}
Hence, applying L\"umer-Phillips's theorem, it follows that $A$ generates a strongly continuous semigroup of contractions. Moreover, using \cite[Theorem 2.1.]{enrike1990exponential}, \eqref{wave-equation-linear} is globally exponentially stable provided that 
$\omega$ is a neighbourhood of $\Gamma$.
In particular, using Proposition \ref{lyapunov90}, there exists a Lyapunov operator $P\in\mathcal{L}(H)$ such that a Lyapunov inequality holds. Therefore, Hypothesis \ref{hypothesis-exp} holds for \eqref{wave-equation-linear}.  
\end{example}

%

\begin{hypothesis}[Polynomial stability]
\label{hypothesis-poly}
Assume that the origin of \eqref{lclosed-system} is $1$-globally polynomially stable with $\gamma>\frac{1}{2}$. 
\end{hypothesis}

\begin{example}

Consider again on the wave equation \eqref{wave-equation-linear}. Suppose that the condition such that the exponential is not satisfied. Assume moreover that $\Omega$ is a torus (i.e., the boundary conditions are uniformly equal). Then, under some regularity assumptions on the damping function $a$, that are collected in \cite[Theorem 2.6]{anantharaman2014sharp}, the trajectory is $1$-globally polynomially stable, with $\gamma>\frac{1}{2}$. 

\end{example}

\subsection{Nonlinear damping functions}
As it has been noticed at the beginning of the section, we want to study the asymptotic behavior of the origin of \eqref{control-system} with \eqref{feedback-law} modified by a nonlinearity, namely the \textit{nonlinear damping function}. We provide next the definition of the nonlinear damping function. 

\begin{definition}[Nonlinear damping functions on $S$]
\label{def-sat}
Let $S$ be a real Banach space equipped with the norm $\Vert \cdot \Vert_S$. Assume moreover that $(U,S)$ is a rigged Hilbert space\footnote{We refer the interested reader to \cite{de2005role} for more details on rigged Hilbert spaces}, i.e., $S$ is a dense subspace of $U$ and that the following inclusions hold
\begin{equation}
S\subseteq U\subseteq S^\prime.
\end{equation}
In particular, the duality pairing between $S$ and $S^\prime$ is compatible with the inner product on $U$, in the sense that
\begin{equation}
(u,v)_{S\times S^\prime}=\langle u,v\rangle_U,\quad \forall u\in S\subset U,\quad \forall v\in U=U^\prime\subset S^\prime.
\end{equation} A function $\sigma:U\rightarrow S$ is said to be a \emph{nonlinear damping function on $U$} if there exists positive constants $C_1$ and $C_2$ such that the following properties hold true.
\begin{itemize}
\item[1.] The function $\sigma$ is locally Lipschitz.
\item[2.] The function $\sigma$ is maximal monotone, that is: for all $s_1,s_2\in U$, $\sigma$ satisfies
\begin{equation}
\label{max-monotone}
\langle \sigma(s_1)-\sigma(s_2),s_1-s_2\rangle_U \geq 0.
\end{equation}
\item[3.] For any $s\in U$, one has
\begin{equation}\label{eq:inf-sat}
\Vert \sigma(s) - C_1s\Vert_{S^\prime}\leq C_2h(\Vert s\Vert_S)\langle \sigma(s),s\rangle_U,
\end{equation}
where $h:\mathbb{R}\to\mathbb{R}_+$ is a continuous and non decreasing function satisfying $h(0)>0$.
\end{itemize}
\end{definition}

\begin{example}[Some examples of nonlinear damping functions]
We provide two sets of examples depending on the fact that $S=U$ or not.
\begin{itemize}
\item[1.] Suppose that $S:=U$. The saturation studied in \cite{slemrod1989mcss}, \cite{lasiecka2002saturation} and \cite{mcpa2017siam} is defined as follows, for all $s\in U$, 
\begin{equation}
\label{sath}
\mathfrak{sat}_U(s):=\left\{
\begin{split}
&\frac{s}{\Vert s\Vert_U}s_0\text{ if }\Vert s\Vert \geq s_0,\\
&s\hspace{1.1cm}\text{      if } \Vert s\Vert\leq s_0
\end{split}
\right.
\end{equation}
where the positive constant $s_0$ is called the saturation level. This operator clearly satisfies Item 1. of Definition \ref{def-sat}. The fact that this operator is globally Lipschitz is proven in \cite{slemrod1989mcss}. Moreover, one verifies easily that this operator satisfies Item 3. of Definition \ref{def-sat}. In \cite{seidman2001note}, this operator is proved to be $m$-dissipative, which implies that it is maximal monotone.

\item[2.] Suppose that $S:=L^\infty(0,1)$ and $U=L^2(0,1)$. In this case, $S^\prime$ is the space of finitely additive measures\footnote{Let $\Sigma$ be an algebra of sets of a given set $\Omega$. A function $\lambda:\Sigma\rightarrow \bar{\RR}$ is said to be a finitely additive signed measure if: (i) $\lambda(\emptyset)=0$; (ii) given $K_1,K_2\in\Sigma$, disjoint subsets, $\lambda(K_1\cup K_2)=\lambda(K_1)+\lambda(K_2)$. The corresponding space, which is a Banach space, is endowed with the norm of total variation.}. It contains the space $L^1$, and  $S^\prime$ is continuously embedded in $L^1(0,1)$ via the operator $u\mapsto \int_0^1 udx$ (see \cite[Remark 7, Page 102]{brezis2010functional}). Therefore, it is clear  that $(U,S)$ is a rigged Hilbert space. Moreover, one can write via the latter embedding that
\begin{equation}
\label{embedding-linfty}
(u,v)_{S\times S^\prime} = (v,u)_{L^1(0,1)\times L^\infty(0,1)}=\langle u,v\rangle_{L^2(0,1)}.
\end{equation}
For this case, we give two examples: the first one is a saturation, while the second one is borrowed from \cite{MVC}.
\begin{itemize}
\item[(i)]\emph{($L^\infty$-saturation)} Standard saturation functions can be defined as follows:
\begin{equation}
\label{satLinfty}
\begin{split}
L^2(0,1) &\rightarrow L^\infty(0,1),\\
s &\mapsto \mathfrak{sat}_{L^\infty(0,1)}(s),
\end{split}
\end{equation}
where
$\mathfrak{sat}_{L^\infty(0,1)}(s)(\cdot) = \sat(s(\cdot))$, where $\sat:\mathbb{R}\to\mathbb{R}$
is a non decreasing, locally Lipschitz function verifying, for some positive constant $C$, that $|\sat(s)-Cs|\leq s\sat(s)$ for every $s\in\mathbb{R}$. For instance, $\arctan$, $\tanh$ and the standard saturation functions $\sigma_0(s)=\frac{s}{\max(1,\vert s\vert)}$ are saturation functions. In all these cases, the function $h$ appearing in \eqref{eq:inf-sat} can be taken equal to one. Note moreover that the saturations are uniformly bounded.
\item[(ii)] We have also the following nonlinear damping function, also called \emph{weak damping}, and borrowed from \cite[Theorem 2.]{MVC}
\begin{equation*}
\sigma(s) \leq c|s|^q, \forall s\in\mathbb{R}
\end{equation*}
with $c\geq 0$ and $q< 1$ and such that $\sigma(0)=0$ and $\sigma^\prime(0)\geq 0$. In this case, we have $h(|s|)=|s|^{q-1}$.
\end{itemize}
\end{itemize}
\end{example} 

Consider now the following nonlinear dynamics
\begin{equation}
\label{nlclosed-loop}
\left\{
\begin{split}
&\frac{d}{dt}z = A_\sigma(z),\\
&z(0) = z_0,
\end{split}
\right.
\end{equation}
where the nonlinear operator $A_\sigma$ is defined as follows
\begin{equation}\label{eq:NLop}
\begin{split}
A_\sigma: D(A_\sigma)\subset H&\rightarrow H\\
z&\mapsto Az-\sqrt{k}B\sigma(\sqrt{k}B^\star z).
\end{split}
\end{equation}
with $D(A_\sigma)$ the domain of $A_\sigma$. Since $B$ is bounded, one clearly has that $D(A_\sigma)=D(A)$. 

For the latter system, there exist many results related to its well-posedness and the asymptotic stability of its origin. The following theorem collects some of them.
\begin{theorem}[Well-posedness and global asymptotic stability]
\label{thm-wp-gas}
\begin{itemize}

\item[(i)] Suppose that $\sigma$ is a nonlinear damping. Therefore, there exist a unique strong solution to \eqref{nlclosed-loop} for every initial condition $z_0\in D(A)$. Moreover, the following functions 
\begin{equation}
\label{eq:Wde}
t\mapsto \Vert W_\sigma(t)z_0\Vert_H,\: t\mapsto \Vert A_\sigma W_\sigma(t)z_0\Vert_H,
\end{equation}
are nonincreasing.
\item[(ii)] Supposing that all the assumptions of the latter item hold and assuming moreover that $D(A)$ is compactly embedded in $H$, then the origin of \eqref{nlclosed-loop} is globally asymptotically stable, i.e., for every $z_0\in D(A)$,
\begin{equation}
\lim_{t\rightarrow +\infty} \Vert W_\sigma(t) z_0\Vert_H^2 = 0.
\end{equation} 
\end{itemize}
\end{theorem}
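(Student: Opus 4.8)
The plan is to treat the two items separately, using classical nonlinear semigroup theory for item (i) and a LaSalle-type invariance argument for item (ii).

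For item (i), the strategy is to show that $A_\sigma$ is $m$-dissipative on $H$ (up to a suitable reformulation), so that the Komura--Kato / Crandall--Liggett theory applies and yields a unique strong solution for every $z_0 \in D(A_\sigma) = D(A)$. Dissipativity follows from the contraction property of $(e^{tA})_{t\geq 0}$ together with monotonicity of $\sigma$: for $z \in D(A)$,
\begin{equation*}
\langle A_\sigma(z), z\rangle_H = \langle Az, z\rangle_H - \sqrt{k}\,\langle B\sigma(\sqrt{k}B^\star z), z\rangle_H \leq - \langle \sigma(\sqrt{k}B^\star z), \sqrt{k}B^\star z\rangle_U \leq 0,
\end{equation*}
where the last inequality uses Item 2 of Definition \ref{def-sat} with $s_2 = 0$ (and $\sigma(0)=0$, which can be arranged without loss of generality). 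The range condition $\mathrm{Ran}(\mathrm{Id} - \lambda A_\sigma) = H$ for small $\lambda>0$ comes from the fact that $A$ generates a contraction semigroup (so $\mathrm{Id}-\lambda A$ is onto) and that $z \mapsto \sqrt{k}B\sigma(\sqrt{k}B^\star z)$ is monotone and locally Lipschitz with $B$ bounded, hence a perturbation that preserves maximal monotonicity (a standard sum-of-maximal-monotone-operators argument; local Lipschitzness plus the a priori bound from dissipativity gives global existence). Denoting the resulting nonlinear contraction semigroup by $(W_\sigma(t))_{t\geq 0}$, the map $t\mapsto \|W_\sigma(t)z_0\|_H$ is nonincreasing because the semigroup is a contraction and the origin is an equilibrium; the map $t\mapsto \|A_\sigma W_\sigma(t)z_0\|_H$ is nonincreasing because, for autonomous evolution governed by an $m$-dissipative operator, $\frac{d}{dt}W_\sigma(t)z_0 = A_\sigma W_\sigma(t)z_0$ and the difference quotient argument $\|W_\sigma(t+h)z_0 - W_\sigma(t)z_0\|_H \leq \|W_\sigma(h)z_0 - z_0\|_H$ (contraction), divided by $h$ and passed to the limit, gives that the strong derivative has nonincreasing norm.

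For item (ii), the plan is a standard LaSalle invariance principle argument. The bounds from item (i) show that, for $z_0 \in D(A)$, the trajectory $\{W_\sigma(t)z_0 : t\geq 0\}$ is bounded in $D(A)$ (both $\|W_\sigma(t)z_0\|_H$ and $\|A_\sigma W_\sigma(t)z_0\|_H$, hence $\|A W_\sigma(t)z_0\|_H$ since $B$ is bounded and $\sigma$ maps bounded sets to bounded sets, stay bounded). Since $D(A)$ is compactly embedded in $H$, the trajectory is precompact in $H$, so its $\omega$-limit set $\Omega(z_0)$ is nonempty, compact, invariant under $(W_\sigma(t))_{t\geq 0}$, and $V(z) = \|z\|_H^2$ is constant on it. Along any trajectory starting in $\Omega(z_0)$, constancy of $V$ forces $\frac{d}{dt}\|W_\sigma(t)w\|_H^2 = 0$, i.e. $\langle \sigma(\sqrt{k}B^\star W_\sigma(t)w), \sqrt{k}B^\star W_\sigma(t)w\rangle_U \equiv 0$; by Item 3 of Definition \ref{def-sat} (with $h(0)>0$) this forces $\sigma(\sqrt{k}B^\star W_\sigma(t)w) = 0$ near points where $B^\star W_\sigma(t)w$ is small, and more robustly it forces $B^\star W_\sigma(t)w = 0$ for all $t$, so $w$ generates a trajectory of the undamped linear system $\dot z = Az$ with $B^\star z \equiv 0$ — which is exactly a trajectory of the closed-loop linear system \eqref{lclosed-system} since $-kBB^\star z = 0$ along it. By Hypothesis \ref{hypothesis-exp} (or \ref{hypothesis-poly}), such a trajectory converges to $0$, but it also stays in $\Omega(z_0)$ where $\|\cdot\|_H$ is constant, so $\|w\|_H = 0$. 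Hence $\Omega(z_0) = \{0\}$, which by precompactness gives $\|W_\sigma(t)z_0\|_H \to 0$.

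The main obstacle I anticipate is making the monotonicity/range argument in item (i) fully rigorous when $S \neq U$: the nonlinearity $\sigma$ is only given as a map $U \to S \subset U$ with the growth condition \eqref{eq:inf-sat} phrased in the $S'$ norm, so one must check that the composition $z\mapsto \sqrt{k}B\sigma(\sqrt{k}B^\star z)$ is genuinely a well-defined, locally Lipschitz, monotone operator on $H$ — this uses boundedness of $B$ and $B^\star$ and the compatibility of the rigged-space pairing with $\langle\cdot,\cdot\rangle_U$, but deserves care. A secondary subtlety is the step in item (ii) deducing $B^\star W_\sigma(t)w \equiv 0$ from $\langle \sigma(\sqrt{k}B^\star \cdot),\sqrt{k}B^\star\cdot\rangle_U \equiv 0$: one exploits $C_1 > 0$ and $h(0) > 0$ in \eqref{eq:inf-sat}, together with $\langle\sigma(s),s\rangle_U \geq 0$ and the fact that $\langle\sigma(s),s\rangle_U = 0$ combined with $\|\sigma(s)-C_1 s\|_{S'} \leq C_2 h(\|s\|_S)\cdot 0 = 0$ gives $\sigma(s) = C_1 s$ and then $C_1\|s\|_U^2 = \langle\sigma(s),s\rangle_U = 0$, hence $s=0$.
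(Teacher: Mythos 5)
The paper gives no proof of its own for this theorem: item (i) is delegated to Showalter's result on m-dissipative operators and nonlinear contraction semigroups, and item (ii) to Dafermos/Haraux, whose argument is exactly LaSalle's invariance principle with precompact orbits coming from the compact embedding of $D(A)$ into $H$ --- so your sketch (m-dissipativity of $A_\sigma$ via monotonicity of $\sigma$ and a maximal-monotone perturbation argument, nonincreasing norms from the contraction property and the difference-quotient estimate, then LaSalle plus the $\sigma(s)=C_1 s \Rightarrow s=0$ step from Item 3) reconstructs essentially the same approach and is correct at the level of rigor the paper itself adopts. The one point to flag is that your LaSalle step invokes Hypothesis \ref{hypothesis-exp} or \ref{hypothesis-poly} to rule out nonzero limit trajectories with $B^\star z\equiv 0$, an assumption the theorem statement does not list; this is a defect of the statement rather than of your argument (with $B=0$ and $A$ skew-adjoint with compact resolvent the claim as written fails), and such a detectability-type hypothesis is indeed in force wherever the theorem is used in the paper, as it is in the cited references.
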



The proof of the first item is provided in \cite[Lemma 2.1, Part IV, page 165]{showalter1997monobanach}. The second item has been proved in the specific case of hyperbolic systems in \cite{dafermos1978asymptotic} (differentiable nonlinear damping) and \cite{haraux1985stabilization} (non differentiable nonlinear damping). The proof of this item relies on the use of the LaSalle's Invariance Principle. For the well-posedness and the global asymptotic stability of the closed-loop system \eqref{nlclosed-loop} in the case where $\sigma$ is a saturation, we refer the interested reader to \cite{seidman2001note}, \cite{slemrod1989mcss} or more recently to \cite{map2017mcss}. 

\begin{remark}
In some cases, it is not immediate to check whether $D(A)$ is compactly embedded in $H$. For instance, we know that this holds for hyperbolic systems \cite{haraux1985stabilization} or the linearized Korteweg-de Vries equation \cite{map2017mcss}. Note that the global asymptotic stability does not give any information on the decay rate of the trajectory of the systems. Here, we do not aim at just proving that the origin of \eqref{nlclosed-loop} is globally asymptotically stable, but rather at characterizing the decay rate of trajectories. 
\end{remark} 



\subsection{Global asymptotic stability results}
The remaining parts of the paper aim at characterizing precisely the stability properties of the origin of \eqref{nlclosed-loop}. Before stating our main results, let us provide some stability definitions. 
\begin{definition}[Semi-global exponential stability] 
The origin of \eqref{nlclosed-loop} is said to be \emph{semi-globally exponentially stable in $D(A)$} if, for any positive $r$ and any initial condition satisfying $\Vert z_0\Vert_{D(A)}\leq r$, there exist two positive constants $\mu:=\mu(r)$ and $K:=K(r)$ such that
\begin{equation}
\Vert W_{\sigma}(t)z_0\Vert_H\leq K e^{-\mu t}\Vert z_0\Vert_H, \quad \forall t\geq 0.
\end{equation}
\end{definition}
This definition is inspired by \cite{mcpa2017siam}, which focuses on a particular nonlinear damping function, namely the \textit{saturation}. It is well known that a linear finite-dimensional system subject to a saturated controller cannot be globally exponentially stabilized (see \cite{sussmann1991saturation}). The semi-global exponential stability written just below can be thought as a global exponential stability that is not uniform with respect to the initial condition (i.e., the constant $C$ and $\mu$ depend on the bound of the initial condition). Note also that \cite[Theorem 2]{MVC} corresponds exactly to a semi-global exponential stability result. 

A similar definition can be stated for the case of the polynomial stability.
\begin{definition}[Semi-global polynomial stability]
\label{semi-glob-poly}
The origin of \eqref{nlclosed-loop} is said to be \emph{semi-globally polynomially stable in $D(A)$} if there exists a positive constant $\gamma$ and if, for any positive $r$ and any initial condition satisfying $\Vert z_0\Vert_{D(A)}\leq r$, there exists a positive constant $C:=C(r)$ such that
\begin{equation}
\Vert W_{\sigma}(t)z_0\Vert_H\leq \frac{C}{(1+t)^{\gamma}}\Vert z_0\Vert_{D(A)}, \quad \forall t\geq 0.
\end{equation} 
\end{definition}
As for the semi-global exponential stability, the semi-global polynomial stability is a global polynomial stability which is not uniform with respect to the initial condition.

We are now in position to state the main results of our paper. The first one is based on Hypothesis \ref{hypothesis-exp}.
\begin{theorem}[Semi-global exponential stability]
\label{thm-abstract-lyap}
Consider that $\sigma$ in \eqref{nlclosed-loop} is a nonlinear damping function satisfying Item 1. and 3. of Definition \ref{def-sat}. Assume that Hypothesis \ref{hypothesis-exp} holds. Then, we have the following results 
\begin{itemize}
\item[1.] If $S=U$, there exists a strict and global Lyapunov function for \eqref{nlclosed-loop}. 
\item[2.] If $S\neq U$, assume that $\sigma$ is also maximal monotone (i.e., $\sigma$ satisfies Item 2. of Def \ref{def-sat}) and that the following inequality holds
\begin{equation}
\label{continuous-embedding-exp}
\Vert B^\star s\Vert_S \leq c_S \Vert s\Vert_{D(A)},\quad \forall s\in D(A).
\end{equation}
Hence, the origin of \eqref{nlclosed-loop} is semi-globally exponentially stable in $D(A)$.  
\end{itemize} 
\end{theorem}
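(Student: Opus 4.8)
## Proof proposal for Theorem \ref{thm-abstract-lyap}

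The plan is to treat the two cases separately, building in each case a Lyapunov functional of the announced shape: the quadratic functional $V_0(z)=\langle Pz,z\rangle_H$ furnished by Proposition \ref{lyapunov90} plus a correction term that absorbs the nonlinearity introduced by $\sigma$. For item 1 ($S=U$), I would set $V(z)=V_0(z)$ itself and compute its derivative along \eqref{nlclosed-loop}. Writing $\tilde A=A-kBB^\star$ and $A_\sigma(z)=\tilde A z-\sqrt k B(\sigma(\sqrt k B^\star z)-\sqrt k B^\star z)$, one gets
\begin{equation*}
\frac{d}{dt}V_0(W_\sigma(t)z_0)=\langle A_\sigma z,Pz\rangle_H+\langle Pz,A_\sigma z\rangle_H\le -C\Vert z\Vert_H^2-2\sqrt k\,\langle B^\star Pz,\,\sigma(\sqrt k B^\star z)-\sqrt k B^\star z\rangle_U,
\end{equation*}
using \eqref{lyapunov-linear-equation} for the $\tilde A$-part. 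The cross term is where I would use Item 3 of Definition \ref{def-sat} with $S=U$: the inequality $\Vert\sigma(s)-C_1 s\Vert_U\le C_2 h(\Vert s\Vert_U)\langle\sigma(s),s\rangle_U$ lets one bound the cross term by something controlled by $\langle\sigma(\sqrt k B^\star z),\sqrt k B^\star z\rangle_U$ and a small multiple of $\Vert z\Vert_H^2$; since $B$ and $P$ are bounded and (crucially, by Item 1 in Theorem \ref{thm-wp-gas}) the trajectory stays bounded in $D(A)$ hence $\Vert\sqrt k B^\star z\Vert_U$ is bounded, $h$ evaluated along the trajectory is bounded by a universal constant. Absorbing the dissipative monotone contribution $\langle\sigma(s),s\rangle_U\ge 0$ on the good side and choosing $C_1$-matched constants, one obtains $\frac{d}{dt}V_0\le -c\Vert z\Vert_H^2\le -c' V_0$, giving a strict global Lyapunov function — global precisely because $P$, $C_1$, $C_2$ are fixed and none of the constants depend on $z_0$.

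For item 2 ($S\ne U$), global uniformity fails because the estimate \eqref{eq:inf-sat} now lives in $S'$ and pairing it against $B^\star Pz$ requires $B^\star Pz\in S$, with a norm one must control by $\Vert z\Vert_{D(A)}$ via \eqref{continuous-embedding-exp}; that introduces the initial-condition-dependent bound $r$. So here I would fix $r>0$, take $\Vert z_0\Vert_{D(A)}\le r$, and use that $t\mapsto\Vert W_\sigma(t)z_0\Vert_{D(A)}$ is nonincreasing (Theorem \ref{thm-wp-gas}(i)) to keep the whole trajectory in the ball of radius $r$ in $D(A)$. On that ball, \eqref{continuous-embedding-exp} gives $\Vert B^\star P z\Vert_S\le c_S\Vert Pz\Vert_{D(A)}\le c_S\Vert P\Vert_{\mathcal L(D(A))} r =: M(r)$, and $h(\Vert\sqrt k B^\star z\Vert_S)\le h(c_S\sqrt k\,r)=:H(r)$. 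Then the duality pairing $\langle B^\star Pz,\sigma(\sqrt k B^\star z)-C_1\sqrt k B^\star z\rangle$, interpreted as $(\cdot,\cdot)_{S\times S'}$ which is compatible with $\langle\cdot,\cdot\rangle_U$, is bounded in absolute value by $M(r)\Vert\sigma(\sqrt k B^\star z)-C_1\sqrt k B^\star z\Vert_{S'}\le M(r)C_2 H(r)\langle\sigma(\sqrt k B^\star z),\sqrt k B^\star z\rangle_U$. Plugging into $\frac{d}{dt}V_0$, using maximal monotonicity ($\langle\sigma(s),s\rangle_U=\langle\sigma(s)-\sigma(0),s-0\rangle_U\ge 0$, and $\sigma(0)=0$ for a damping so this is genuinely nonnegative) to discard the dissipative term on the favorable side, and choosing $C_1$ so the linear-in-$\sigma$ piece recombines with the $-C\Vert z\Vert_H^2$ margin, I would reach $\frac{d}{dt}V_0(W_\sigma(t)z_0)\le -c(r)\Vert z\Vert_H^2\le -\mu(r)V_0(W_\sigma(t)z_0)$ for a decay rate $\mu(r)$ depending on $r$. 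Grönwall plus the coercivity and boundedness of $P$ then yields $\Vert W_\sigma(t)z_0\Vert_H\le K(r)e^{-\mu(r)t}\Vert z_0\Vert_H$, i.e. semi-global exponential stability in $D(A)$.

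The main obstacle, and the step I would spend the most care on, is the cross-term estimate in item 2: one must pair the quantity $\sigma(\sqrt k B^\star z)-C_1\sqrt k B^\star z$, which \eqref{eq:inf-sat} only controls in the weak space $S'$, against $\sqrt k B^\star P z$, and this is legitimate only if the latter sits in $S$ with a quantitative bound — which is exactly what hypothesis \eqref{continuous-embedding-exp} provides, but only after one has confirmed via Theorem \ref{thm-wp-gas}(i) that the $D(A)$-norm of the solution never exceeds its initial value. A secondary subtlety is justifying that $V_0$ is differentiable along strong solutions and that the formal computation of $\frac{d}{dt}V_0$ is valid; this follows from $z_0\in D(A)$, $D(A_\sigma)=D(A)$, the strong solution having $\frac{d}{dt}z=A_\sigma(z)\in H$, boundedness of $P$ on $H$, and local Lipschitzness of $\sigma$ (Item 1 of Definition \ref{def-sat}), but it should be stated cleanly. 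Everything else — absorbing constants, choosing $C_1$, invoking coercivity of $P$ for the equivalence of $V_0$ with $\Vert\cdot\Vert_H^2$ — is routine.
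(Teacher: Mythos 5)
There is a genuine gap, and it is the same one in both items: you work with $V_0(z)=\langle Pz,z\rangle_H$ alone, but after applying Item 3 of Definition \ref{def-sat} the cross term in $\frac{d}{dt}V_0$ is bounded above by a \emph{nonnegative} quantity of the form $\mathrm{const}\cdot h(\cdot)\,\langle \sigma(B^\star z),B^\star z\rangle_U$, and there is no mechanism in your argument to get rid of it. Monotonicity only says $\langle \sigma(B^\star z),B^\star z\rangle_U\geq 0$, which is the wrong sign for "discarding it on the favorable side", and it cannot be absorbed into the margin $-C\Vert z\Vert_H^2$: its coefficient (involving $\Vert B^\star Pz\Vert$, $C_2$, $h$ evaluated along the trajectory, and in case 2 the constants $c_S\Vert P\Vert_{\mathcal{L}(D(A))}r$) is not small, and $C_1$, $C_2$, $C$ are all fixed data, not parameters you can tune. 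The paper's proof hinges precisely on augmenting the Lyapunov functional so that its extra term produces, via the dissipativity of $A$ (namely $\frac{d}{dt}\Vert z\Vert_H^2\leq -2\langle\sigma(B^\star z),B^\star z\rangle_U$), a \emph{negative} multiple of $\langle\sigma(B^\star z),B^\star z\rangle_U$ that exactly cancels the cross term: when $S=U$ the extra term is $M\int_0^{\Vert z\Vert_H^2}\sqrt{v}\,h(\Vert B\Vert_{\mathcal{L}(U,H)}\sqrt{v})\,dv$ with $M=C_2\Vert B^\star\Vert\Vert P\Vert$ (the construction of \cite{liu1996finite}), and when $S\neq U$ it is $M(r)\Vert z\Vert_H^2$ with $M(r)=c_SC_2h(\Vert B^\star\Vert r)r\Vert P\Vert_{\mathcal{L}(D(A))}$. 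Without such a compensating term your differential inequality does not close.

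A second, statement-level problem concerns Item 1: to bound $h$ along the trajectory you invoke Theorem \ref{thm-wp-gas}(i) and the $D(A)$-bound of the solution, so all your constants end up depending on $z_0$; at best this would give a semi-global estimate, whereas the claim (and the paper's proof) is a \emph{global} strict Lyapunov function, valid for all $z$ with constants independent of the initial condition and requiring no a priori trajectory bound nor strong solutions. The nonlinear integral term in the functional is exactly what makes this possible, because its derivative carries the factor $\Vert z\Vert_H\,h(\Vert B\Vert\Vert z\Vert_H)$ that matches the growth of the cross term pointwise in $z$. Your treatment of Item 2 is otherwise on the right track (rigged-space pairing, \eqref{continuous-embedding-exp}, nonincreasing $D(A)$-norm to freeze $h$ and $\Vert B^\star Pz\Vert_S$ at level $r$, then Gr\"onwall), but it needs the $M(r)\Vert z\Vert_H^2$ term to be completed.
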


A similar theorem can be stated when assuming that Hypothesis \ref{hypothesis-poly} holds.

\begin{theorem}[Semi-global polynomial stability]
\label{thm-poly}
Consider that $\sigma$ in \eqref{nlclosed-loop} is a nonlinear damping function satisfying all the items of Definition \ref{def-sat}. Assume moreover that $S=U$ and Hypothesis \ref{hypothesis-poly} holds. Then, the origin of \eqref{nlclosed-loop} is semi-globally polynomially stable with $\gamma=\frac{1}{2}$. 
\end{theorem}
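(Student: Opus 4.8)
The plan is to build a Lyapunov functional for the nonlinear closed loop \eqref{nlclosed-loop} as a perturbation of the one constructed for the linear system \eqref{lclosed-system}. By Hypothesis \ref{hypothesis-poly} and Proposition \ref{lyapunov-2018} applied to $\tilde A = A-kBB^\star$ with $\theta=1$, there exists a self-adjoint, positive-definite, coercive operator $P_1:D(A)\to D(A)$ with $\langle \tilde A z,P_1 z\rangle_H+\langle P_1 z,\tilde A z\rangle_H\le -C\|z\|_H^2$ and $\langle e^{t\tilde A}z,P_1 e^{t\tilde A}z\rangle_H\le \frac{C_1}{(1+t)^{2\gamma-1}}\|z\|_{D(A)}^2$. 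First I would fix $r>0$ and an initial condition with $\|z_0\|_{D(A)}\le r$; by Theorem \ref{thm-wp-gas}(i) the maps $t\mapsto\|W_\sigma(t)z_0\|_H$ and $t\mapsto\|A_\sigma W_\sigma(t)z_0\|_H$ are nonincreasing, so the whole trajectory stays in the ball $\{\|z\|_{D(A)}\le c\,r\}$, and since $\sigma$ is locally Lipschitz (Item 1), on this ball $h(\|\sqrt k B^\star z\|_S)$ and the relevant Lipschitz constants are bounded by constants depending only on $r$. This is what makes the estimates semi-global rather than global.

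The candidate functional is $V(z)=\langle P_1 z,z\rangle_H$ (equivalently $V=\langle Q_1 z,z\rangle_H+\alpha\|z\|_H^2$ as in Proposition \ref{lyapunov-2018}). Differentiating along \eqref{nlclosed-loop}, write $A_\sigma z = \tilde A z + \sqrt k B\big(\sqrt k B^\star z-\sigma(\sqrt k B^\star z)\big)$, so that
\begin{equation*}
\frac{d}{dt}V(z)\le -C\|z\|_H^2 + 2\big\langle P_1 z,\ \sqrt k B(\sqrt k B^\star z-\sigma(\sqrt k B^\star z))\big\rangle_H .
\end{equation*}
The cross term is controlled using Item 3 of Definition \ref{def-sat}: with $S=U$ one has $\|\sigma(s)-C_1 s\|_U\le C_2 h(\|s\|_U)\langle\sigma(s),s\rangle_U$, and on the invariant ball $h(\|\sqrt k B^\star z\|_U)\le h_r$. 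Hence the cross term is bounded, up to $r$-dependent constants, by $\|z\|_H\cdot\langle\sigma(\sqrt k B^\star z),\sqrt k B^\star z\rangle_U$. The dissipation identity for the energy $E(z)=\tfrac12\|z\|_H^2$ gives $\frac{d}{dt}E(z) = \langle Az,z\rangle_H-\langle\sqrt k B\sigma(\sqrt k B^\star z),z\rangle_H \le -\langle\sigma(\sqrt k B^\star z),\sqrt k B^\star z\rangle_H$, i.e. the ``damping power'' $\mathcal D(z):=\langle\sigma(\sqrt k B^\star z),\sqrt k B^\star z\rangle_U$ is integrable in time along trajectories with $\int_0^\infty \mathcal D\le E(z_0)$. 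Combining, $\frac{d}{dt}V(z)\le -C\|z\|_H^2 + K_r\,\|z\|_H\,\mathcal D(z)\le -C\|z\|_H^2 + K_r r\,\mathcal D(z)$.

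From here I would run the same real-variable argument as in the ``$\Leftarrow$'' direction of Proposition \ref{lyapunov-2018}. Since $\|z(t)\|_H$ is nonincreasing, integrating the differential inequality on $[t/2,t]$ and using $\int_0^\infty\mathcal D<\infty$ together with $\mathcal D\to 0$ one gets $(1+t)\|z(t)\|_H^2\le \frac{C'}{C}V(z(t/2)) + (\text{tail of }\mathcal D)$. Iterating this ``doubling'' estimate — apply it again to bound $V(z(t/2))$ in terms of $\|z(t/4)\|_H^2$, and control $V$ itself via $V(w)\le\|P_1\|\,\|w\|_{D(A)}^2$ on the ball — yields $\|z(t)\|_H^2\le \frac{C(r)}{1+t}\|z_0\|_{D(A)}^2$, i.e. $\gamma=\tfrac12$ in Definition \ref{semi-glob-poly}. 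A cleaner variant: show $t\,V(z(t))$ is essentially bounded by running Grönwall-type bookkeeping on $W(t):=V(z(t))+K_r r\int_t^\infty\mathcal D(z(s))\,ds$, which satisfies $\dot W\le -C\|z\|_H^2$ and $W\ge$ (const)$\,t\,\|z(t)\|_H^2$ by the nonincrease of the $H$-norm, and then invoke a standard integral lemma (à la Haraux) to convert $\int_0^\infty\|z\|_H^2<\infty$ plus monotonicity into the $1/(1+t)$ rate.

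The main obstacle I anticipate is twofold. First, making the invariance/boundedness argument rigorous so that all the $r$-dependent constants ($h_r$, the local Lipschitz constant, hence $K_r$) are genuinely uniform along the whole trajectory — this rests on Theorem \ref{thm-wp-gas}(i) but needs the bound $\|\sqrt k B^\star z\|_S\le c_S\|z\|_{D(A)}$ to pass from $D(A)$-bounds to $S$-bounds, which is automatic when $S=U$ since $B^\star\in\mathcal L(H,U)$. Second, and more delicate, is the bootstrap from ``$\|z\|_H^2$ integrable and $H$-norm nonincreasing'' to the explicit polynomial rate: one must be careful that the constant produced does not degrade at each doubling step, which is why the right formulation is the single integral inequality $\frac{d}{dt}V\le -C\|z\|_H^2+K_r r\,\mathcal D$ combined at the end with \eqref{eq:est-pol}-type control of $V$ along the linear flow — but here $z$ solves the \emph{nonlinear} equation, so \eqref{eq:est-pol} is not directly available and must be replaced by the decay one extracts from the inequality itself, closing the loop self-consistently.
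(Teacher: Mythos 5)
Your proposal is essentially the paper's proof: the paper also takes the polynomial Lyapunov operator $P_1$ from Proposition \ref{lyapunov-2018}, adds a compensating term proportional to the dissipated power (the paper uses $M\Vert z\Vert_H^2$, whose derivative is $\leq -2M\langle\sigma(B^\star z),B^\star z\rangle_U$, which is the same bookkeeping as your tail integral $K_r r\int_t^\infty\mathcal D$), controls the cross term via Item 3 of Definition \ref{def-sat} together with the $D(A)$-invariance of Theorem \ref{thm-wp-gas}(i) to get $r$-dependent constants, and then integrates $\frac{d}{dt}\tilde V\leq -C\Vert z\Vert_H^2$ and uses the nonincrease of the $H$-norm to turn $\int_0^\infty\Vert z\Vert_H^2<\infty$ into the $1/(1+t)$ rate, exactly as in your ``cleaner variant'' (your first, doubling-based route is not needed). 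The only slips are harmless: the cross term should be bounded by $\Vert z\Vert_{D(A)}\cdot\mathcal D$ (since $\Vert P_1 z\Vert_H\lesssim\Vert z\Vert_{D(A)}$, not $\Vert z\Vert_H$) before invoking the $D(A)$-ball of radius $r$, and the claim $W(t)\geq \mathrm{const}\, t\,\Vert z(t)\Vert_H^2$ should instead be read as $W(0)\geq C t\Vert z(t)\Vert_H^2$ obtained by integrating $\dot W\leq -C\Vert z\Vert_H^2$ and using monotonicity, which is precisely the paper's concluding estimate.
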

\begin{remark}
The conclusion of Item $1.$ actually holds without the assumption of maximal monotonicity of the nonlinear damping function $\sigma$. Moreover, this case is entirely similar to the finite-dimensional one, cf. Appendix~\ref{app-finite} below.
\end{remark}
\begin{remark}[On the property \eqref{continuous-embedding-exp}]
In general, the property \eqref{continuous-embedding-exp} should be weaken, especially in the case of the wave equation in dimension higher than one. A more general assumption would be the following: there exists a positive $p\in\mathbb{N}$ and a positive constant $c_S$ such that
\begin{equation*}
\Vert B^\star s\Vert_S \leq c_S \Vert s\Vert_{D(A^p)}.
\end{equation*} 
However, in that case, applying directly our strategy fails, because it would require dissipativity of the semigroup in $D(A^p)$, which is in general false in dimension higher than $1$ and for $p\geq 2$.  
\end{remark}

\begin{remark}[On the polynomial stability result]
Theorem \ref{thm-poly} states a result only for the case where $S=U$. Indeed, the case $S\neq U$ would need a dissipativity property in $D(A^2)$, which is not true in general. 
\end{remark}

%

\section{Proof of the main theorems}
\label{sec_proof}

In this section, we provide the proof of Theorem \ref{thm-abstract-lyap} and the proof of Theorem \ref{thm-poly}. These proofs are based on a Lyapunov strategy.

\subsection{Proof of Theorem \ref{thm-abstract-lyap}}


We split the proof of Theorem \ref{thm-abstract-lyap} into two cases. Firstly, we tackle Item 1. of Theorem \ref{thm-abstract-lyap} and then Item 2. Indeed, the Lyapunov functions considered in the two cases are different. In both cases, each argument is itself divided into two steps. First, we find a strict Lyapunov function
and then we prove the asymptotic stability of the origin of \eqref{nlclosed-loop}. 

\textbf{Case 1: $S=U$.}

Set $\tilde{A}=A-C_1BB^\star$, where the positive constant $C_1$ is given in \eqref{eq:inf-sat} and $P\in \mathcal{L}(H)$ is defined in \eqref{lyapunov-linear-equation}. Consider the following candidate Lyapunov function
\begin{equation}\label{eq:tildeV}
\tilde V(z):=\langle Pz,z\rangle_H+M\int_0^{\Vert z\Vert^2} \sqrt{v} h(\Vert B\Vert_{\mathcal{L}(U,H)} \sqrt{v})dv,
\end{equation}
where $M$ is a sufficiently large positive constant to be chosen later and $h$ is the function defined in Item 3 of Definition \ref{def-sat}. This function, inspired by \cite{liu1996finite}, is positive definite and coercive. Indeed, since $h(0)>0$, 
\begin{equation}
\int_{0}^{\Vert z\Vert^2_H} \sqrt{v} h(\Vert B\Vert_{\mathcal{L}(U,H)}\sqrt{v}) dv\geq h(0) \int_{0}^{\Vert z\Vert^2_H} \sqrt{v} dv = \frac{2h(0)}{3} \Vert z\Vert^3_H.
\end{equation}
Noticing that there exists $\alpha>0$ such that $\alpha\Vert z\Vert^2\leq \langle Pz,z\rangle_H\leq \Vert P\Vert_{\mathcal{L}(H)} \Vert z\Vert^2$, one has 
\begin{equation}
\label{Lyapunov-semi-bounds}
\alpha\Vert z\Vert^2+Mh(0) \frac{2}{3} \Vert z\Vert^3_H \leq \tilde{V}\leq \Vert P\Vert_{\mathcal{L}(H)} \Vert z\Vert^2 + M\Vert z\Vert_H^3 h(\Vert B\Vert_{\mathcal{L}(U,H)}\Vert z\Vert_H).
\end{equation}

Applying Cauchy-Schwarz's inequality, one has
\begin{align} 
\frac{d}{dt}\langle Pz,z\rangle_H= &\langle Pz,\tilde{A}z\rangle_H+\langle P\tilde{A}z,z\rangle_H \\
&+\langle Pz,B(C_1B^\star z-\sigma(B^\star z)\rangle_H+\langle PB(C_1B^\star z-\sigma(B^\star z),z\rangle_H\nonumber\\
\leq & -C\Vert z\Vert^2_H+2\langle  B^\star Pz,C_1B^\star z-\sigma(B^\star z)\rangle_U\label{eq:SU}\\
\leq &-C\Vert z\Vert^2_H+2\Vert B^\star Pz\Vert_U\Vert C_1B^\star z-\sigma(B^\star z)\Vert_U,\nonumber
\end{align}
Using Item 3. of Definition \ref{def-sat} and the fact that $B^\star$ is bounded in $U$, it yields
\begin{align*}
\frac{d}{dt}\langle Pz,z\rangle_H\leq &- C\Vert z\Vert_H^2 + 2C_2\Vert B^\star\Vert_{\mathcal{L}(H,U)}\Vert P\Vert_{\mathcal{L}(H)}\Vert z\Vert_{H} h(\Vert B^\star z\Vert_U)
\langle B^\star z,\sigma(B^\star z)\rangle_U\\
\leq & -C\Vert z\Vert_H^2 + 2C_2\Vert B^\star\Vert_{\mathcal{L}(H,U)}\Vert P\Vert_{\mathcal{L}(H)}\Vert z\Vert_{H} h(\Vert B\Vert_{\mathcal{L}(U,H)} \Vert z\Vert_U)
\langle B^\star z,\sigma(B^\star z)\rangle_U.
\end{align*}
Secondly, using the dissipativity of the operator $A$, one has
\begin{align*}
M\frac{d}{dt} \int_0^{\Vert z\Vert^2}  \sqrt{v} h(\Vert B\Vert_{\mathcal{L}(U,H)}\sqrt{v})dv = &M\Vert z\Vert_H h(\Vert B\Vert_{\mathcal{L}(U,H)} \Vert z\Vert_H)(\langle A z,z\rangle_H\\
& + \langle z,Az\rangle - 2\langle B^\star z,\sigma(B^\star z)\rangle_U)\\
\leq & -2M\Vert z\Vert_H h(\Vert B\Vert_{\mathcal{L}(U,H)} \Vert z\Vert_H)\langle B^\star z,\sigma(B^\star z)\rangle_U
\end{align*}
Hence, if one chooses $M$ as 
\begin{equation}
M = C_2 \Vert B^\star\Vert_{\mathcal{L}(H,U)}\Vert P\Vert_{\mathcal{L}(H)},
\end{equation}
one obtains, after adding the above two equations, that
\begin{equation}
\label{abs-strict-lyap}
\frac{d}{dt} \tilde{V}(z) \leq - C\Vert z\Vert^2_H.
\end{equation}
This concludes the proof of Theorem \ref{thm-abstract-lyap} in the case where $S=U$.

\textbf{Case 2: $S\neq U$.}

In this case, we are not able to control the term $\Vert C_1B^\star z-\sigma(B^\star z)\Vert_U$ in \eqref{eq:SU} with Item 3. of Definition \ref{def-sat}. To tackle this term, the inequality \eqref{continuous-embedding-exp} together with Item 1. of Theorem \ref{thm-wp-gas} will be used in order to prove that the origin of \eqref{nlclosed-loop} is semi-globally exponentially stable. 

Let $\tilde{V}(z)$ be the Lyapunov function candidate defined by
\begin{equation}
z\in D(A)\mapsto \tilde{V}(z):=\langle Pz,z\rangle_H + M\Vert z\Vert^2_H, 
\end{equation}
where $M>0$ will be selected later.

First, using the dissipativity of the operator $A$, one has
\begin{equation}
\frac{d}{dt}M\Vert z\Vert^2_H\leq -2M\langle B^\star z,\sigma(B^\star z)\rangle_U.
\end{equation}

Second, perfoming similar computations than in the case $S=U$, one obtains 
\begin{equation}
\frac{d}{dt}\langle Pz,z\rangle_H \leq  -C\Vert z\Vert^2_H+2\langle  B^\star Pz,C_1B^\star z-\sigma(B^\star z)\rangle_U.
\end{equation}  
It remains now to control the term
$$
2\langle  B^\star Pz,C_1B^\star z-\sigma(B^\star z)\rangle_U.
$$
We now assume that we have a strong solution for \eqref{nlclosed-loop}, whose initial condition $z_0\in D(A)$ is such that
\begin{equation}
\Vert z_0\Vert_{D(A)} \leq r,\: \Vert z_0\Vert_H\leq r,
\end{equation} 
for some positive $r$.
Since $(U,S)$ is a rigged Hilbert space, hence the following holds 
$$
\langle  B^\star Pz,C_1B^\star z-\sigma(B^\star z)\rangle_U=
(B^\star P z,C_1B^\star z-\sigma(B^\star z))_{S\times S^\prime}.
$$
Hence, applying Cauchy-Schwarz's inequality, one obtains
\begin{align}
\frac{d}{dt}\langle Pz,z\rangle_H \leq -C\Vert z\Vert^2_H + 2\Vert B^\star P z\Vert_S\Vert C_1B^\star z-\sigma(B^\star z)\Vert_{S^\prime}.
\end{align}
Moreover, thanks to \eqref{continuous-embedding-exp}, one has
\begin{equation}
\label{eq:crux1}
\Vert B^\star P z\Vert_S \leq c_S\Vert P z\Vert_{D(A)}
\end{equation}
and
\begin{equation}
\begin{split}
\label{eq:crux2}
\Vert C_1B^\star z-\sigma(B^\star z)\Vert_{S^\prime}\leq &C_2h(\Vert B^\star z\Vert_S)\langle B^\star z,\sigma(B^\star z)\rangle_U\\
\leq  &C_2 h(\Vert B^\star\Vert \Vert z\Vert_{D(A)})\langle B^\star z,\sigma(B^\star z)\rangle_U,
\end{split}
\end{equation}
where we have used  the fact that $h$ is non decreasing  and Item 3. of Definition \ref{def-sat} in the second one. 


Now, using \eqref{eq:Wde}, the fact that $P\in\mathcal{L}(D(A))$ and the dissipativity of the strong solution, which comes from Item 2. of Theorem \ref{thm-wp-gas} and which can be written as follows:
\begin{equation}\label{eq-DA}
\Vert PW_{\sigma}(t)z_0\Vert_{D(A)}\leq \Vert P\Vert_{\mathcal{L}(D(A))}\Vert z_0\Vert_{D(A)}, \: \Vert W_\sigma(t) z_0\Vert_{D(A)} \leq \Vert z_0\Vert_{D(A)},
\end{equation}
one has
\begin{equation}\label{eq:good0}
\frac{d}{dt}\langle Pz,z\rangle_H \leq -C\Vert z\Vert^2_H + 2c_S h(\Vert B^\star\Vert r)rC_2\Vert P\Vert_{\mathcal{L}(D(A)}\langle B^\star z,\sigma(B^\star z)\rangle_U.
\end{equation}
Therefore, if one selects $M$ such that 
\begin{equation}
M= c_S C_2 h(\Vert B^\star\Vert r)r\Vert P\Vert_{\mathcal{L}(D(A))},
\end{equation} 
it follows
\begin{equation}
\frac{d}{dt}\tilde{V}(z)\leq -C\Vert z\Vert^2_H.
\end{equation}
Note that we have, for all $z\in H$
\begin{equation}
\langle Pz,z\rangle_H \leq \Vert P\Vert_{\mathcal{L}(H)}\Vert z\Vert_H.
\end{equation}
Hence, it yields
\begin{equation}
\begin{split}
\frac{d}{dt}\tilde{V}(z) \leq &- \frac{C}{2\Vert P\Vert_{\mathcal{L}(H)}}\langle Pz,z\rangle_H - \frac{C}{2}\Vert z\Vert^2_H\\
\leq &-\mu \tilde{V}(z),
\end{split}
\end{equation}
where
\begin{equation}
\mu:=\min\left(\frac{C}{2\Vert P\Vert_{\mathcal{L}(H)}},\frac{C}{2M}\right).
\end{equation}
After integration of the above differential inequality, one obtains
\begin{equation}
\tilde{V}(W_{\sigma}(t)z_0) \leq e^{-\mu t}\tilde{V}(z_0),\quad \forall t\geq 0. 
\end{equation}
Hence,
\begin{equation}
\Vert W_{\sigma}(t)z_0\Vert_H^2 \leq \frac{\Vert P\Vert_{\mathcal{L}(H)}+M}{M}e^{-\mu t}\Vert z_0\Vert^2_{H}. 
\end{equation}
Since $M$ depends on the bound of the initial condition, the origin of \eqref{nlclosed-loop} is semi-globally exponentially stable for any strong solution to \eqref{nlclosed-loop}. It concludes the proof of Item 2. of Theorem \ref{thm-abstract-lyap}.  

\begin{remark}
The Lyapunov functional used in the proof of the case $S=U$ corresponds to the one used in the finite-dimensional case, treated in Appendix \ref{app-finite}. In particular, one can characterize the asymptotic behavior of the trajectory in a similar manner than in Remark \ref{rem-asymptotic-behavior} by setting $\lambda_{\min}(P):=\alpha$ and $K: X\in\mathbb{R}_+\mapsto \int_0^{X} \sqrt{v} h(\Vert B\Vert_{\mathcal{L}(U,H)}\sqrt{v})dv\in\mathbb{R}_+$. This implies in particular that we do not need the solution to be strong, which is in contrast with the case where $S\neq U$, where the decay rate depends on the bound of the initial condition in $D(A)$.
\end{remark}

\subsection{Proof of Theorem~\ref{thm-poly}}

We assume here that $S=U$ and $\theta=1$. Set $\tilde{A}=A-C_1BB^\star$, $P_1: D(A)\rightarrow D(A)$ defined in \eqref{pol-lyapunov-linear-equation} and $C_1$ is the positive constant defined in Item 2 of Definition \ref{def-sat}.

Let us consider the following candidate Lyapunov function
\begin{equation}
\tilde{V}(z) = \langle P_1 z,z\rangle_H + M\Vert z\Vert_H^2,
\end{equation}
where $M$ is a positive constant that has to be chosen.

First, using the dissipativity of the operator $A$, one has
\begin{equation}
\label{compensate}
\begin{split}
M \frac{d}{dt} \Vert z\Vert^2_H = &M(\langle Az,z\rangle_H + \langle  z, Az\rangle_H + 2\langle \sigma(B^\star z),B^\star z \rangle_U)
\leq - 2M\langle \sigma(B^\star z),B^\star z\rangle_U.
\end{split}
\end{equation}

Secondly, we have
\begin{equation*}
\begin{split}
\frac{d}{dt} \langle P_1 z,z\rangle_H = & \langle P_1 A_\sigma(z),z\rangle_H + \langle P_1 z,A_\sigma(z)\rangle_H
= \langle P_1 \tilde{A} z,z\rangle_H + \langle P_1 z,\tilde{A} z\rangle_H \\
& + 2 \langle P_1 B (C_1 B^\star z-\sigma(B^\star z),z\rangle_H
\leq  -C\Vert z\Vert^2_H \\
&+ 2 \langle C_1 B^\star z - \sigma(B^\star z),B^\star P_1 z\rangle_{U},
\end{split}
\end{equation*}
where we have used in the last line the Lyapunov inequality \eqref{pol-lyapunov-linear-equation}. 

Applying Cauchy-Schwarz inequality and Item 3. of Definition \ref{def-sat}, one obtains
\begin{equation}
\begin{split}
\frac{d}{dt} \langle P_1 z,z\rangle_H \leq &-C\Vert z\Vert^2_H + 2 \Vert C_1 B^\star z - \sigma(B^\star z)\Vert_U\Vert B^\star P_1 z\Vert_U\\
\leq & - C\Vert z\Vert^2_H + 2 C_2 h(\Vert B^\star z\Vert_S) \langle \sigma(B^\star z),B^\star z\rangle_U \Vert B^\star P_1 z\Vert_{U}
\end{split} 
\end{equation}

It remains to choose a constant $M$ in \eqref{compensate} in order to compensate the term
\begin{equation}
\label{eq:nl-poly}
 C_2 h(\Vert B^\star z\Vert_U)\Vert B^\star P_1 z\Vert_{U},
\end{equation}
which appears in the latter inequality.


We consider initial condition $z_0$ in $D(A)$ satisfying
\begin{equation}
\label{bounded-IC-poly}
\Vert z_0\Vert_{D(A)} \leq r,
\end{equation}
for some positive $r$.
Note that, since $\sigma$ is a nonlinear damping, one can apply Item 1. of Theorem \ref{thm-wp-gas}. Therefore, for all $z_0\in D(A)$,
\begin{equation}
\Vert W_\sigma(t) z_0\Vert_H \leq \Vert z_0\Vert_H,\: \Vert A W_\sigma(t) z_0\Vert_H \leq \Vert Az_0\Vert_H.
\end{equation}
This latter property together with \eqref{bounded-IC-poly} implies that
\begin{equation}
\label{dissipativity-poly-nl}
\Vert W_\sigma(t) z_0\Vert_{D(A)}\leq r.
\end{equation}
Finally, using the fact that $B^\star \in \mathcal{L}(H,U)$ and that $h$ is non decreasing, \eqref{eq:nl-poly} becomes
\begin{equation}
\begin{split}
C_2 h(\Vert B^\star z\Vert_U)\Vert B^\star P_1 z\Vert_{U}\leq & C_2 h(\Vert B^\star\Vert_{\mathcal{L}(H,U)} \Vert z\Vert_{H}) \Vert B^\star\Vert_{\mathcal{L}(H,U)}   \Vert P_1 z\Vert_H\\
\leq & C_2 C_\theta h(\Vert B^\star\Vert_{\mathcal{L}(H,U)} \Vert z\Vert_{H}) \Vert B^\star\Vert_{\mathcal{L}(H,U)} \Vert z\Vert_{D(A)}.
\end{split}
\end{equation}
Then, using \eqref{dissipativity-poly-nl} and the fact that $\Vert z\Vert_H\leq \Vert z\Vert_{D(A)}$, one has
\begin{equation}
C_2 h(\Vert B^\star z\Vert_U)\Vert B^\star P_1 z\Vert_{U} \leq C_2C_\theta h(\Vert B^\star\Vert_{\mathcal{L}(H,U)} r) \Vert B^\star\Vert_{\mathcal{L}(H,U)} r.
\end{equation} 
Finally, if one selects $M$ such that
\begin{equation}
M= C_2C_\theta h(\Vert B^\star\Vert_{\mathcal{L}(H,U)}r) \Vert B^\star\Vert_{\mathcal{L}(H,U)} r,
\end{equation}
the derivative of $\tilde{V}$ along the trajectories of \eqref{nlclosed-loop} satisfies
\begin{equation}
\label{ineq-poly-nl}
\tilde{V}(z)\leq - C\Vert z\Vert^2_H,\quad \forall z\in D(A).
\end{equation}
Note that $\tilde{V}$ satisfies, for all $z\in D(A)$
\begin{equation}
\label{V-bounded-poly}
\alpha \Vert z\Vert^2_{D(A)} +M\Vert z\Vert_H^2 \leq \tilde{V}(z)\leq M\Vert z\Vert_H^2 + C_\theta \Vert z\Vert^2_{D(A)}.
\end{equation}
First, integrating \eqref{ineq-poly-nl} between $0$ and $t$, one obtains
\begin{equation}\label{eq:W}
\tilde{V}(W_\sigma(t)z_0)-\tilde{V}(z_0)\leq -C\int_0^t \Vert W_\sigma(s) z_0 \Vert^2_H ds.
\end{equation}
Since $A_\sigma$ is dissipative, one deduces from \eqref{eq:W} that, for every $t\geq 0$,
$$
C(1+t) \Vert W_\sigma(t) z_0\Vert_H^2\leq C\Vert z_0\Vert_H^2+\tilde{V}(z_0),
$$
and hence, 
\begin{equation}
\Vert W_\sigma(t) z_0\Vert_H^2 \leq \frac{1}{1+t}\frac{M+C_\theta +C}{C} \Vert z_0\Vert_H^2.
\end{equation}
This achieves the proof of Theorem \ref{thm-poly}.

\section{Illustrative examples}
\label{sec_example}

\subsection{Linearized Korteweg-de Vries equation with spatially localized damping}

As a first example, let us focus on the following partial differential equation,
\begin{equation}
\left\{
\begin{split}
&z_t(t,x) + z_x(t,x) + z_{xxx}(t,x)=-a(x)z(t,x),\quad (t,x)\in \mathbb{R}_{\geq 0}\times [0,L],\\
&z(t,0) = z(t,L)=z_x(t,L)=0,\quad t\in \mathbb{R}_{\geq 0},\\
&z(0,x)=z_0(x),\quad x\in [0,L],
\end{split}
\right.
\end{equation}
where $L$ is a positive constant, $\omega$ is a nonempty open subset of $(0,L)$ and $a(x)$ is a smooth bounded nonnegative function satisfying $a(x)\geq a_0$ for all $x\in\omega$ for some positive constant $a_0$.

This equation can be written in an abstract way as in \eqref{lclosed-system} if one sets $H=L^2(0,L)$, $U=L^2(\Omega)$,
\begin{equation}
\begin{split} 
A:D(A)\subset L^2(0,L)&\rightarrow L^2(0,L),\\
z&\mapsto -z^\prime-z^{\prime\prime\prime},
\end{split}
\end{equation}
where 
\begin{equation}
D(A) :=\lbrace z\in H^3(0,L)\mid z(0)=z(L)=z^\prime(L)=0\rbrace,
\end{equation}
and 
\begin{equation}
\begin{split}
B : L^2(\omega) \rightarrow L^2(\Omega)\\
 u \mapsto \sqrt{a(x)} u.
\end{split}
\end{equation}\footnote{We refer to \cite{haraux1985stabilization} for this definition in the case of hyperbolic system.}
The adjoint operators of $A$ and $B$ are, respectively
\begin{equation}
\begin{split}
A^\star: D(A^\star)\subset H&\rightarrow H,\\
z & \mapsto z^\prime + z^{\prime\prime\prime},
\end{split}
\end{equation}
with $D(A^\star):= \lbrace z\in H^3(0,L)\mid z(0)=z(L)=z^\prime(0)=0\rbrace$, and
\begin{equation}
\begin{split}
B^\star : L^2(\Omega)&\rightarrow L^2(\omega)\\
z &\mapsto \sqrt{a(x)}z.
\end{split}
\end{equation}
A straightforward computation, together with some integrations by parts, shows that
\begin{equation}
\langle Az,z\rangle_H + \langle z,Az\rangle_H\leq 0,\quad \forall z\in D(A). 
\end{equation}
Since $A$ is a closed linear operator and $D(A)$ is dense in $H$, according to L\"umer-Phillips' theorem (see e.g., \cite[Theorem 3.8.4., Page 103]{tucsnak2009observation}), it follows that $A$ generates a strongly continuous semi-group of contractions. Note that, according to \cite[Section 4]{cerpa2013control}, Hypothesis \ref{hypothesis-exp} holds.

In the case where $S=U$, the result follows easily, since the operator $B^\star$ does not any regularity propert as in the case where $S\neq U$. Consider now the saturation $\sigma$ defined in \eqref{satLinfty}, i.e. $\sigma=\satl$. In order to check whether \eqref{embedding-linfty} holds, the following result, obtained in \cite{mcpa2015kdv_saturating}, is needed:  

\begin{lemma}[\cite{mcpa2015kdv_saturating}, Lemma 4.]
For all $z\in D(A)$, there exists a positive constant $\Delta$ such that
\begin{equation*}
\Vert z\Vert_{H^1_0(0,L)}\leq \Delta \Vert z\Vert_{D(A)}.
\end{equation*}
\end{lemma}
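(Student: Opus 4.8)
The plan is to prove the Sobolev-type embedding $\Vert z\Vert_{H^1_0(0,L)}\leq \Delta\Vert z\Vert_{D(A)}$ for all $z\in D(A)$ by a standard elliptic-type estimate, exploiting the fact that $A z = -z' - z'''$ controls the third derivative of $z$ once one takes the boundary conditions into account. First I would set $f := Az = -z' - z'''\in L^2(0,L)$ and observe that $\Vert f\Vert_H\leq\Vert z\Vert_{D(A)}$ by definition of the graph norm, so it suffices to bound $\Vert z'\Vert_{L^2}$ by $C(\Vert z\Vert_{L^2}+\Vert f\Vert_{L^2})$.

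The key step is an integration-by-parts / energy argument: multiply the identity $z''' = -z' - f$ by $z'$ and integrate over $(0,L)$. Using the boundary conditions $z(0)=z(L)=z'(L)=0$, the boundary terms arising from $\int_0^L z'''z'\,\mathrm{d}x = [z''z']_0^L - \int_0^L (z'')^2\,\mathrm{d}x$ reduce to $-z''(0)z'(0)$, which is not immediately zero; to handle it one instead pairs with $z$ itself or uses the Gagliardo–Nirenberg/interpolation inequality $\Vert z'\Vert_{L^2}^2\leq \varepsilon\Vert z''\Vert_{L^2}^2 + C_\varepsilon\Vert z\Vert_{L^2}^2$ together with a bound on $\Vert z''\Vert_{L^2}$ obtained from $\int_0^L z'''z\,\mathrm{d}x$, where now all boundary terms vanish because $z(0)=z(L)=0$ kills $[z''z]_0^L$ and $[z'z']_0^L$ has the $z'(L)=0$ factor at one endpoint while the other endpoint contributes $-(z'(0))^2\leq 0$ with a favourable sign. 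This yields $\Vert z''\Vert_{L^2}^2\leq \Vert f\Vert_{L^2}\Vert z\Vert_{L^2} + \Vert z'\Vert_{L^2}^2 + (\text{good sign term})$, and combined with the interpolation inequality and Young's inequality one absorbs $\Vert z'\Vert_{L^2}^2$ and closes the estimate to get $\Vert z''\Vert_{L^2}+\Vert z'\Vert_{L^2}\leq C(\Vert z\Vert_{L^2}+\Vert f\Vert_{L^2})$, hence $\Vert z\Vert_{H^1_0}\leq \Delta\Vert z\Vert_{D(A)}$.

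The main obstacle is the careful bookkeeping of the boundary terms in the integrations by parts: because the domain $D(A)$ imposes only three of the four natural boundary conditions (nothing is prescribed on $z'(0)$, $z''(0)$, $z''(L)$), one cannot make every boundary contribution disappear, and must arrange the pairings so that the surviving terms either vanish by the prescribed conditions or carry a sign compatible with the estimate. Once the correct combination of test functions ($z$ and $z'$) is chosen, the rest is a routine application of the one-dimensional interpolation inequality $\Vert z'\Vert_{L^\infty}, \Vert z'\Vert_{L^2}\lesssim \Vert z\Vert_{L^2}^{1/2}\Vert z''\Vert_{L^2}^{1/2}+\Vert z\Vert_{L^2}$ and Young's inequality, and the Poincaré inequality gives the equivalence of $\Vert z\Vert_{H^1_0(0,L)}$ with $\Vert z'\Vert_{L^2(0,L)}$ so that controlling $\Vert z'\Vert_{L^2}$ suffices.
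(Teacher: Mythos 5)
You are asked to prove a statement that the paper itself does not prove: it is quoted from \cite{mcpa2015kdv_saturating}, so your argument can only be judged on its own terms. Your overall strategy --- set $f:=Az=-z'-z'''$, note $\Vert f\Vert_{L^2}\le\Vert z\Vert_{D(A)}$, and close an elliptic-type estimate $\Vert z'\Vert_{L^2}\le C(\Vert z\Vert_{L^2}+\Vert f\Vert_{L^2})$ --- is the right kind of argument, but the pivotal step as you describe it fails. Pairing the equation with $z$ does not produce any bound on $\Vert z''\Vert_{L^2}$: integrating by parts with $z(0)=z(L)=z'(L)=0$ gives
\begin{equation*}
\int_0^L z'''\,z\,\mathrm{d}x=\bigl[z''z\bigr]_0^L-\tfrac12\bigl[(z')^2\bigr]_0^L=\tfrac12\,(z'(0))^2,
\end{equation*}
so this pairing yields only the trace identity $\tfrac12 (z'(0))^2=-\int_0^L f z\,\mathrm{d}x$ (hence a bound on $|z'(0)|$), and no $\Vert z''\Vert_{L^2}^2$ term ever appears; your asserted inequality $\Vert z''\Vert_{L^2}^2\le\Vert f\Vert_{L^2}\Vert z\Vert_{L^2}+\Vert z'\Vert_{L^2}^2+(\text{good sign term})$ has no derivation. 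The only pairing that generates $\Vert z''\Vert_{L^2}^2$ is with $z'$, and that reinstates precisely the boundary term $-z''(0)z'(0)$ you set out to avoid, where $z''(0)$ is not prescribed by $D(A)$ and is not controlled by the quantities you have without a further trace/interpolation argument involving $z'''$. As written, the estimate therefore does not close.

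The gap is repairable, and in a way that makes all boundary bookkeeping unnecessary: interpolate directly against the third derivative. For every $\varepsilon>0$ there is $C_\varepsilon>0$ such that $\Vert z'\Vert_{L^2}\le\varepsilon\Vert z'''\Vert_{L^2}+C_\varepsilon\Vert z\Vert_{L^2}$ for all $z\in H^3(0,L)$ (one-dimensional Gagliardo--Nirenberg/Ehrling, no boundary conditions needed). Since $\Vert z'''\Vert_{L^2}\le\Vert z'+z'''\Vert_{L^2}+\Vert z'\Vert_{L^2}=\Vert f\Vert_{L^2}+\Vert z'\Vert_{L^2}$, substituting and choosing $\varepsilon=\tfrac12$ lets you absorb $\Vert z'\Vert_{L^2}$ on the left and obtain $\Vert z'\Vert_{L^2}\le\Vert f\Vert_{L^2}+2C_{1/2}\Vert z\Vert_{L^2}\le C\Vert z\Vert_{D(A)}$; together with the Poincar\'e inequality (the boundary conditions enter only to ensure $z\in H^1_0(0,L)$) this gives the lemma. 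If you prefer to keep your route through $\Vert z''\Vert_{L^2}$, you must supply the missing bound by a second interpolation $\Vert z''\Vert_{L^2}\le\varepsilon\Vert z'''\Vert_{L^2}+C_\varepsilon\Vert z\Vert_{L^2}$ and the same absorption of $\Vert z'''\Vert_{L^2}$; no repetition of the identity for $\int_0^L z'''z\,\mathrm{d}x$ can replace it.
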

Using the above mentionned result together with the fact that the space $H^1_0(0,L)$ is continuously embedded in $L^\infty(0,L)$, that is due to Rellich-Kondaroch Theorem \cite[Theorem 9.16, page 285]{brezis2010functional}, one obtains that
\begin{equation*}
\Vert z\Vert_S\leq \Delta \Vert z\Vert_{D(A)}.
\end{equation*}
Since $B^\star$ is bounded in $L^\infty$, there exists a positive constant $c_B$ such that $\Vert B^\star z\Vert_S \leq c_B\Vert z\Vert_{S}$, and then
\begin{equation*}
\Vert B^\star z\Vert_{S}\leq \Delta\Vert z\Vert_{D(A)}.
\end{equation*}
Therefore, \eqref{continuous-embedding-exp} holds for the linear Korteweg-de Vries equation.

Finally, since all the properties needed to apply Theorem \ref{thm-abstract-lyap} hold, this proves that the origin of
\begin{equation}
\label{kdv-without}
\left\{
\begin{split}
&z_t(t,x) + z_x(t,x) + z_{xxx}(t,x)=- \sqrt{a(x)}\mathfrak{sat}_{L^\infty}(\sqrt{a(x)}z(t,x)),\quad (t,x)\in \mathbb{R}_{\geq 0}\times [0,L],\\
&z(t,0) = z(t,L)=z_x(t,L)=0,\quad t\in \mathbb{R}_{\geq 0},\\
&z(0,x)=z_0(x),\quad x\in [0,L],
\end{split}
\right.
\end{equation}
is semi-globally exponentially stable in $D(A)$. 



\subsection{Wave equation}

Consider Example \ref{wave-equation-linear} in the case where the damping is modified by a nonlinear damping function satisfying all the items of Definition \ref{def-sat}. Then, the equation reads as follows

\begin{equation}
\label{wave-equation-nonlinear}
\left\{
\begin{split}
&z_{tt} = \Laplace z - \sqrt{a(x)} \sigma(\sqrt{a(x)}z_t),\quad (t,x)\in \mathbb{R}_+\times \Omega,\\
&z(t,x)=0,\quad (t,x)\in \mathbb{R}_+\times \Gamma,\\
&z(0,x)=z_0(x),\: z_t(0,x)=z_1(x),
\end{split}
\right.
\end{equation}

As before, the case $S=U$ follows easily. Therefore, assuming that Hypothesis \ref{hypothesis-exp} holds, there exists a strict and global Lyapunov function for \eqref{wave-equation-nonlinear} and, assuming that Hypothesis \ref{hypothesis-poly} holds, the origin of \eqref{wave-equation-nonlinear} is semi-globally polynomially stable.

Assume now that $S=L^\infty(\Omega)$. Note that the inequality given by \eqref{continuous-embedding-exp} does not hold if the dimension of $x$ is higher or equal to $2$. Then, assume that $\Omega:=[0,1]$. This implies that
\begin{equation}
\begin{split}
\Vert B^\star z\Vert_S=&\Vert a(\cdot)z_t(t,\cdot)\Vert_{L^\infty(\Omega)}\\
\leq &\Vert a(\cdot)\Vert_{L^\infty(\Omega)}\Vert z_t(t,\cdot)\Vert_{L^\infty(\Omega)}. 
\end{split}
\end{equation}
Since $H_0^1(\Omega)$ embedds continuously in $L^\infty(\Omega)$ due to Rellich-Kondrachov theorem \cite[Theorem 9.16, page 285]{brezis2010functional}, there exists a positive constant $C_\Omega$ such that
\begin{equation}
\Vert B^\star z\Vert_S\leq C_\Omega \Vert a(\cdot)\Vert_{L^\infty(\Omega)} \Vert z_t(t,\cdot)\Vert_{H^1_0(\Omega)}.
\end{equation}
Noticing that
\begin{equation}
\Vert (z,z_t)\Vert_{D(A)}:=\Vert z\Vert_{H^2(\Omega)\cup H^1_0(\Omega)} + \Vert z_t\Vert_{H^1_0(\Omega)},
\end{equation}
then
\begin{equation}
\Vert B^\star z\Vert_S \leq C_\Omega \Vert a(\cdot)\Vert_{L^\infty(\Omega)} \Vert (z,z_t)\Vert_{D(A)}. 
\end{equation}
This implies that \eqref{continuous-embedding-exp} holds for \eqref{wave-equation-linear} with $c_S:=C_\Omega\Vert a(\cdot)\Vert_{L^\infty(\omega)}$. Hence, one can apply Theorem \ref{thm-abstract-lyap}. In particular, the origin of \eqref{wave-equation-nonlinear} is semi-globally exponentially stable in $D(A)$ for any nonlinear damping function $\sigma$ satisfying all the items of Definition \ref{def-sat}.

\begin{remark}
In \cite{MVC}, a similar result is provided for damped  wave equations in dimension $N\leq 3$. The strategy the authors follow in the paper does not rely on a Lyapunov functional, but rather on a analysis of the natural energy of the wave equation in order to obtain an integral inequality. Note that their result are better than ours for the wave equation because they need an  $L^\infty$ bound (unifrom in time along a trajectory) only for $z$ while we (essentially) need a similar bound for $z_t$. 
\end{remark}

\section{Conclusion}
\label{sec_conclusion}

In this paper, we have characterized the asymptotic behavior of a family of linear infinite-dimensional systems subject to a nonlinear damping. Assuming that the origin of the system is globally exponentially stable or globally polynomially stable when the damping is linear, we have built Lyapunov functionals for the nonlinear system. These Lyapunov functionals are the sum of two terms: the first one is based on the Lyapunov operator coming the stabilizability property of the linear system and the second term is added in order to compensate the nonlinearities.

From this work, there exist many research lines which can be pursued further. Below, we have listed some of them.
\begin{itemize}
\item Unfortunetaly, our strategy in the case where $S\neq U$ (i.e.,  $S=L^\infty(\Omega)$) does not work for the wave equation with a dimension higher than $2$. It might be interesting to investigate a weaker property than \eqref{continuous-embedding-exp} in order to characterize precisely the asymptotic behavior of the wave equation subject to a nonlinear damping;
\item In some papers (see e.g., \cite{MVC}), the nonlinear damping function is not assumed to be maximal monotone (i.e., it does not satisfy Item 2. of Definition \ref{def-sat}). We believe that our general strategy might also work without assuming such a property, focusing on some particular partial differential equations.
\item It might be also interesting to investigate ISS properties of such linear infinite-dimensional systems subject to a nonlinear damping. The case where $S=U$ has been tackled in \cite{mcp2018ECC}, but the case $S\neq U$ seems harder to obtain.
\item Our strategy might be also adapted for other nonlinearities, such as the dry damping, which has been studied for instance in \cite{ervedoza2018dry}. In contrast with the nonlinear damping introduced in this paper, the dry damping is not smooth (it is described with a sign function), which makes the well-posedness study of the closed-loop system not trivial to tackle as well as an asymptotic behavior characterization.
\end{itemize}

\label{app-finite}

\section{Lyapunov functions for linear finite-dimensional systems subject to a nonlinear damping}
\label{sec_finite}

\subsection{Deriving Lyapunov functions for the finite-dimensional case}

Let us consider the following linear finite-dimensional system
\begin{equation}
\label{finite}
\frac{d}{dt} z = Az + Bu,
\end{equation}
where $z\in\mathbb{R}^n$, $u\in\mathbb{R}^m$, and $A$ and $B$ of appropriate dimension. Let us denote by $|\cdot |$ the Euclidian norm of $\mathbb{R}^n$ and $|B|$ the induced norm of the matrix $B$. We use $^\top$ to denote the transpose of a matrix. We suppose that the  following properties hold:
\begin{itemize}
\item[(i)] the pair $(A,B)$ is controllable;
\item[(ii)] $A$ is dissipative, i.e., for every $z\in \mathbb{R}^n$,
\begin{equation*}
z^\top A z + z^\top A^\top z\leq 0. 
\end{equation*}
\end{itemize}
Then, for every $k>0$, the feedback-law $u=-kB^\top z$ stabilizes the origin of \eqref{finite}, i.e. the matrix $A-kBB^\top$ is Hurwitz. This means in particular that there exists a unique symmetric positive-definite matrix $P\in\mathbb{R}^{n\times n}$ such that
\begin{equation}
\label{finite-lyapunov-ineq}
(A-kBB^\top)^\top P + P(A-kBB^\top)= - I_{\mathbb{R}^n}.
\end{equation}

We aim at modifying the feedback control $u=-kBB^\star z$ with a nonlinear damping function given by Definition \ref{def-sat} and at building a strict Lyapunov function for the corresponding nonlinear system. Note that in this case $S=U=R^m$. This Lyapunov function is based on the following function
\begin{equation}
\label{lyap-S=U}
K: X\in\mathbb{R}_+\mapsto \int_0^{X} \sqrt{v} h(|B| \sqrt{v})dv\in\mathbb{R}_+.
\end{equation}
In particular, this function is positive, strictly increasing, vanishes at $0$ and tends to infinity as $X$ tends to infinity. 

\begin{theorem}
\label{thm-finite}
Consider a nonlinear damping function only satisfying Items $1.$ and $3.$ of Definition \ref{def-sat}, where $S=U=\mathbb{R}^m$. Let $P$ be the solution of \eqref{finite-lyapunov-ineq} with $k=C_1$ provided in \eqref{eq:inf-sat}. Then, the positive definite function $\tilde V:\mathbb{R}^n\to\mathbb{R}_+$ given by
\begin{equation}\label{eq:tildeV:finite}
\tilde V(z):= z^\top P z +C_2 |B|\ | P|K(\vert z\vert^2),
\end{equation}
where $C_2$ is provided in \eqref{eq:inf-sat}, is a strict Lyapunov function for the following nonlinear system
\begin{equation}
\label{nonlinear-finite}
\frac{d}{dt} z = Az - B\sigma(B^\top z):=A_\sigma(z),
\end{equation}
and, along its trajectories, one has
\begin{equation}
\label{abs-strict-lyap}
\frac{d}{dt} \tilde{V}(z) \leq - |z|^2.
\end{equation}
\end{theorem}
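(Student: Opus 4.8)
The plan is to run the same two-step argument as in Case~1 ($S=U$) of the proof of Theorem~\ref{thm-abstract-lyap}, now in the elementary finite-dimensional setting. The only delicate point is that here the nonlinear cross-term has to be absorbed \emph{entirely} through Item~3 of Definition~\ref{def-sat} (maximal monotonicity, Item~2, is \emph{not} assumed), so the precise weight $C_2|B|\,|P|$ in front of $K$ in \eqref{eq:tildeV:finite} is what makes it cancel exactly rather than merely dominate.

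\emph{Step~1: $\tilde V$ is a proper positive-definite candidate.} Since $(A,B)$ is controllable and $A$ is dissipative, $A-C_1BB^\top$ is Hurwitz, so \eqref{finite-lyapunov-ineq} with $k=C_1$ has a unique symmetric positive-definite solution $P$, whence $\lambda_{\min}(P)|z|^2\le z^\top Pz\le|P|\,|z|^2$. Since $h$ is continuous, nondecreasing and $h(0)>0$, the map $K$ in \eqref{lyap-S=U} is $C^1$, nonnegative, strictly increasing, radially unbounded, and $K(|z|^2)\ge h(0)\int_0^{|z|^2}\sqrt v\,\mathrm dv=\tfrac23h(0)|z|^3$. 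Hence $\tilde V(z)\ge\lambda_{\min}(P)|z|^2+\tfrac23h(0)C_2|B|\,|P|\,|z|^3$, so $\tilde V$ is positive definite and proper; combined with the estimate \eqref{abs-strict-lyap} obtained in Step~2 this also gives global forward existence of solutions of \eqref{nonlinear-finite}, $\sigma$ being locally Lipschitz by Item~1.

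\emph{Step~2: differentiation along \eqref{nonlinear-finite}.} Writing $A_\sigma(z)=\tilde Az+B\bigl(C_1B^\top z-\sigma(B^\top z)\bigr)$ with $\tilde A=A-C_1BB^\top$, using $P=P^\top$ and \eqref{finite-lyapunov-ineq} yields
\begin{equation*}
\tfrac{d}{dt}\bigl(z^\top Pz\bigr)=-|z|^2+2\bigl(B^\top Pz\bigr)^\top\bigl(C_1B^\top z-\sigma(B^\top z)\bigr)\le-|z|^2+2|B|\,|P|\,|z|\;\bigl|\sigma(B^\top z)-C_1B^\top z\bigr|,
\end{equation*}
by Cauchy--Schwarz and $|B^\top Pz|\le|B|\,|P|\,|z|$. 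Item~3 applied to $s=B^\top z$ gives $\bigl|\sigma(B^\top z)-C_1B^\top z\bigr|\le C_2h(|B^\top z|)\,\sigma(B^\top z)^\top B^\top z$, whose right-hand side is nonnegative (Item~3 forces $\sigma(s)^\top s\ge0$); since $h$ is nondecreasing and $|B^\top z|\le|B|\,|z|$ it is at most $C_2h(|B|\,|z|)\,\sigma(B^\top z)^\top B^\top z$. For the second summand, the chain rule gives $\tfrac{d}{dt}K(|z|^2)=K'(|z|^2)\,\tfrac{d}{dt}|z|^2$ with $K'(|z|^2)=|z|\,h(|B|\,|z|)\ge0$, while dissipativity of $A$ gives $\tfrac{d}{dt}|z|^2=z^\top(A+A^\top)z-2\sigma(B^\top z)^\top B^\top z\le-2\sigma(B^\top z)^\top B^\top z$; multiplying by the nonnegative factor $K'(|z|^2)$ yields $\tfrac{d}{dt}K(|z|^2)\le-2|z|\,h(|B|\,|z|)\,\sigma(B^\top z)^\top B^\top z$.

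Adding the two estimates with the weight $M:=C_2|B|\,|P|$ appearing in \eqref{eq:tildeV:finite},
\begin{equation*}
\tfrac{d}{dt}\tilde V(z)\le-|z|^2+2C_2|B|\,|P|\,|z|\,h(|B|\,|z|)\,\sigma(B^\top z)^\top B^\top z-2M|z|\,h(|B|\,|z|)\,\sigma(B^\top z)^\top B^\top z=-|z|^2,
\end{equation*}
which is exactly \eqref{abs-strict-lyap}. The computation is otherwise routine; the only thing to watch is the orientation of the inequalities so that the nonlinear contributions \emph{cancel} — not merely dominate one another — which is ensured by the exact choice $M=C_2|B|\,|P|$ together with the sign $\sigma(s)^\top s\ge0$ extracted from Item~3. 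No functional-analytic obstruction remains in the finite-dimensional setting.
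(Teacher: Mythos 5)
Your proposal is correct and follows essentially the same route as the paper: decompose $A_\sigma(z)=\tilde Az+B(C_1B^\top z-\sigma(B^\top z))$ with $\tilde A=A-C_1BB^\top$, bound the cross term via Cauchy--Schwarz and Item~3 of Definition~\ref{def-sat}, and cancel it against the derivative of the $K(|z|^2)$ term (using dissipativity of $A$) by the exact choice $M=C_2|B|\,|P|$. Your explicit remark that Item~3 forces $\sigma(s)^\top s\ge 0$ is a small clarification the paper leaves implicit, but the argument is the same.
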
 

\begin{proof}
Set $\tilde{A}=A-C_1BB^\star$, where the positive constant $C_1$ is given in \eqref{eq:inf-sat} and $P$ is defined in \eqref{finite-lyapunov-ineq}. Consider the following candidate Lyapunov function $\tilde V(z):= z^\top P z +MK(\vert z\vert)$
where $M$ is a sufficiently large positive constant to be chosen later and $h$ is the function defined in Item 2 of Definition \ref{def-sat}. This function, inspired by \cite{liu1996finite}, is positive definite and coercive. Indeed, since $h(0)>0$, 
\begin{equation}
\int_{0}^{\vert z\vert^2} \sqrt{v} h(|B|\sqrt{v}) dv\geq h(0) \int_{0}^{|z|^2} \sqrt{v} dv = \frac{2h(0)}{3} |z|^3.
\end{equation}
Therefore, $\tilde{V}\geq \frac{2h(0)}{3} | z|^3+\lambda_{\min}(P)| z|^2$, where $\lambda_{\min}(P)$ is the smallest eigenvalue of $P$. It implies in particular that the Lyapunov function $\tilde{V}$ is coercive. Moreover, noticing that $z^\top P z\leq | P| |z|^2$ and that $h$ is increasing, one has therefore
\begin{equation}
\label{Lyapunov-semi-bounds}
\lambda_{\min}(P)| z|^2+\frac{2Mh(0)}{3} \vert z\vert^3 \leq \tilde{V}\leq |P| | z|^2 + M|z|^3 h(|B||z|).
\end{equation}
Applying Cauchy-Schwarz's inequality, one has
\begin{align*} 
\frac{d}{dt}z^\top P z = & z^\top P\tilde{A}z + z^\top P\tilde{A}^\top z +2 z^\top P B(C_1B^\top z-\sigma(B^\top z)\nonumber\\
\leq &-|z|^2+2C_1|B^\top Pz|\ |B^\top z-\sigma(B^\top z)|\nonumber\\
\leq &- |z|^2 + 2C_2|B|\ |P|\ | z| h(| B^\top z|)
z^\top B^\top\sigma(B^\top z),
\end{align*}
where we have used in the last inequality Item 3. of Definition \ref{def-sat}.
Secondly, using the dissipativity of the matrix $A_\sigma$, one has
\begin{align*}
M\frac{d}{dt} \int_0^{|z|^2}  \sqrt{v} h(|B|\sqrt{v})dv 
=& M|z| h(|B|\ |z|)(z^\top A^\top z
 + z^\top Az - 2z^\top B\sigma(B^\top z))\\
\leq & -2M|z| h(|B|\ |z|)z^\top B\sigma(B^\top z).
\end{align*}
Hence, if one chooses $M=C_2 |B|\ | P|$,
one obtains, after adding the two above inequalities, the desired inequality \eqref{abs-strict-lyap}. This achieves the proof of Theorem \ref{thm-finite}.
\end{proof}

\begin{remark}\label{remFin1}
Damping functions are usually of the type $z\mapsto
(\sigma_i(z_i))_{1\leq i\leq m}$, where $\sigma_i$ is a real valued
damping function. If $\sigma(z)/z\to 0$ as $\vert z\vert$ tends to
infinity, then $\sigma$ is said to be a weak damping function, for
instance $C_1s/(1+\vert s\vert)^k$, with $C_1,k>0$. In this case, up
to a positive constant, the function $h$ can be taken equal to one if
$k\geq 1$ and to $(1+\xi)^{k-1}$ if $k\geq 1$. If moreover
$\sigma(\cdot)$ admits non zero limits at infinity, then $\sigma$ is
sometimes called a
saturation function, for instance $\arctan(s)$ or $\tanh(s)$ and, in
this case, the function $h$ is equal to a positive constant. The
definition of damping function is (essentially) known for saturation
functions (cf. \cite{liu1996finite}, \cite{sussmann1991saturation}),
especially the key inequality \eqref{eq:inf-sat}, in which case
the admissible function $h$ is simply constant.
\end{remark}
\begin{remark}\label{remFin2} The behavior of a damping function at
infinity is rather general but the behavior at zero is linear. In the
case of a real valued damping function, the results of this paper can
be easily generalized
to the case where
\begin{equation}\label{eq:b-at-0}
0<\liminf_{z\to 0}\frac{\sigma(z)}z\leq \limsup_{z\to
0}\frac{\sigma(z)}z<\infty.
\end{equation}
The only modification occurs in \eqref{eq:inf-sat}, 
where the constant $C_1$ must be replaced by a positive function $C_1(\cdot)$ bounded below and
above by two positive constants.
\end{remark}

\subsection{Asymptotic behavior characterization}
\label{rem-asymptotic-behavior}
We claim that, once a trajectory enters the unit ball, then it converges exponentially to the origin. Indeed, let $t^\star$ the time such that:
\begin{equation}
\vert W_\sigma(t^\star)z_0\vert = 1, 
\end{equation}
where $t^\star=0$ if  $|z_0|\leq1$.
Since $(W_\sigma(t))_{t\geq 0}$ is a strongly continuous semigroup of contractions, one has
\begin{equation}
\vert W_\sigma(t)z_0\vert \leq \vert W_{\sigma}(t^\star)z_0\vert \leq 1,\quad t\geq t^\star.
\end{equation}

Note that, for all $t\geq t^\star$, one has $\vert W_\sigma(t)z_0\vert^3\leq \vert W_\sigma(t) z_0\vert^2$. Therefore, since $h$ is increasing, \eqref{Lyapunov-semi-bounds} reduces in the unit ball
to 
\begin{equation}
 \lambda_{min}(P)|z|^2\tilde{V}(z) \leq (|P|+ M h(|B|))|z|^2
\end{equation}
and \eqref{abs-strict-lyap} 
\begin{align*}
\frac{d}{dt} \tilde{V}(z) \leq & - C_V\tilde{V}(z), \ \forall t\geq t^*,
\end{align*}
where $C_V:=\frac{1}{|P|+ M h(|B|)}$. Then, one gets easily the claim.

Hence, it remains to characterize the behavior of trajectories of \eqref{nonlinear-finite} before they enter the unit ball. The function $X\mapsto K(X) + \lambda_{\min}(P) X$ is strictly increasing and hence defines a bijection from $\mathbb{R}_+$ to $\mathbb{R}_+$. It has a strictly increasing inverse function, that we call $g$. Then, along any trajectory of \eqref{nonlinear-finite},
\begin{equation}
\frac{d}{dt} \tilde{V}(z) \leq - g(\tilde{V}(z)).
\end{equation}
From here, one can characterize the asymptotic behavior of \eqref{nonlinear-finite}: there exist two positive constants $C_3,C_4$ such that, for $|z_0|$ large enough,
\begin{equation}
|z(t)|\leq C_3\sqrt{(g\circ G)}(C_4 |z_0|-t),\quad \forall t\in \left[0,C_3|z_0|-1\right],
\end{equation}
where $G$ is the function defined by 
\begin{equation*}
G(|z|):=\int_1^{|z|}\frac{dv}{g(v)}.
\end{equation*}
For instance, if the nonlinear damping function $\sigma$ is given by any saturation function satisfying \eqref{satLinfty}, then $h$ is the identity, and we have
 \begin{equation*}
 \label{ineq1-opti}
|z(t)|\leq C_3(C_4 |z_0|-t),\quad \forall t\in \left[0,C_4|z_0|-1\right],
\end{equation*}
which is a linear decay of the trajectories with large initial conditions. It is actually optimal since one can prove a converse inequality as follows. Since $A$ is dissipative, one has that
$$
\frac{d}{dt}\vert z\vert^2=2z^\top \frac{d}{dt} z=-2z^\top B\sigma(B^\top z)\geq -2C_\sigma\vert B\vert\vert z\vert,
$$
where $C_\sigma$ is a constant bounding $\sigma$. Therefore, for all $t\geq 0$
\begin{equation}
\frac{d}{dt}\vert z\vert  \geq -2C_\sigma\vert B\vert. 
\end{equation}
This implies that
\begin{equation}
\label{ineq2-opti}
\vert z\vert\geq -2C_\sigma\vert B\vert t - \vert z_0\vert.
\end{equation}
Hence, for a suitable positive constant $C_5$ and $C_6$ depending on $C_3$, $C_4$, $C_\sigma$ and $\vert B\vert$ and for a sufficiently large initial condition, one therefore has
\begin{equation}
\vert z\vert =C_5(C_6\vert z_0\vert - t), \forall t\in [0,C_6|z_0|-1]. 
\end{equation} 

\bibliographystyle{plain}
\bibliography{bibsm}

\end{document}